\providecommand{\tabularnewline}{\\}
\numberwithin{equation}{section}
\numberwithin{figure}{section}
\theoremstyle{plain}
\newtheorem{thm}{\protect\theoremname}[section]
\theoremstyle{definition}
\newtheorem{defn}[thm]{\protect\definitionname}
\theoremstyle{plain}
\newtheorem{lem}[thm]{\protect\lemmaname}
\newenvironment{proof}[1][\protect\proofname]{\par
\normalfont\topsep6\p@\@plus6\p@\relax
\trivlist
\itemindent\parindent
\item[\hskip\labelsep\scshape #1]\ignorespaces
}{%
\endtrivlist\@endpefalse
}
\providecommand{\proofname}{Proof}
\theoremstyle{remark}
\newtheorem{rem}[thm]{\protect\remarkname}
\theoremstyle{plain}
\newtheorem{cor}[thm]{\protect\corollaryname}
\theoremstyle{definition}
\newtheorem{example}[thm]{\protect\examplename}
\let\@@pmod\pmod
\DeclareRobustCommand{\pmod}{\@ifstar\@pmods\@@pmod}
\def\@pmods#1{\mkern4mu({\operator@font mod}\mkern 6mu#1)}
\date{}
\providecommand{\corollaryname}{Corollary}
\providecommand{\definitionname}{Definition}
\providecommand{\examplename}{Example}
\providecommand{\lemmaname}{Lemma}
\providecommand{\remarkname}{Remark}
\providecommand{\theoremname}{Theorem}
\begin{document}

\title{Efficient Point-Counting Algorithms for Superelliptic Curves}

\author{Matthew Hase-Liu and Nicholas Triantafillou}
\begin{abstract}
{\normalsize{}In this paper, we present efficient algorithms for computing
the number of points and the order of the Jacobian group of a superelliptic
curve over finite fields of prime order $p.$ Our method employs the
Hasse-Weil bounds in conjunction with the Hasse-Witt matrix for superelliptic
curves, whose entries we express in terms of multinomial coefficients.
We present a fast algorithm for counting points on specific trinomial
superelliptic curves and a slower, more general method for all superelliptic
curves. For the first case, we reduce the problem of simplifying the
entries of the Hasse-Witt matrix modulo $p$ to a problem of solving
quadratic Diophantine equations. For the second case, we extend Bostan
et al.'s method for hyperelliptic curves to general superelliptic
curves. We believe the methods we describe are asymptotically the
most efficient known point-counting algorithms for}\textcolor{red}{\normalsize{}
}{\normalsize{}certain families of trinomial superelliptic curves.}
\pagebreak{}
\end{abstract}

\maketitle

\section{Introduction}

In this paper, we present and prove asymptotics for the fastest known
algorithms for counting the number of points on certain families of
possibly singular plane curves. A central problem in number theory
is the study of rational solutions of polynomial equations. Even before
the development of algebra, the Greeks were interested in systematically
determining all rational solutions to the Pythagorean equation: $a^{2}+b^{2}=c^{2}.$
More recently, Andrew Wiles proved Fermat's Last Theorem, which states
that Fermat's equation, $a^{n}+b^{n}=c^{n},$ has no nontrivial rational
solutions \textemdash{} a problem that had withstood over 350 years
of effort by mathematicians. 

To study points on a curve defined by a polynomial equation, it is
often helpful to keep track of the field where they ``live.'' Solutions
to Fermat's equation are called $\mathbb{Q}$-rational because we
are concerned with solutions over the field $\mathbb{Q}$. More generally,
solutions $(x,y)$ to polynomial equations $f(x,y)=0$ are called
$K$-rational points, if both $x$ and $y$ are in a field $K$. In
particular, mathematicians are concerned with $\mathbb{F}_{p^{r}}$-rational
points (where $p$ is some prime) because they 1) are easier to study
than $\mathbb{Q}$-rational points, 2) have analogues to complex analysis,
and 3) are useful in investigating and understanding $\mathbb{Q}$-rational
points \cite{Koblitz}. In this paper, we focus on counting $\mathbb{F}_{p}$-points
on the curve defined by a particular type of irreducible polynomial
equation:
\[
0=y^{a}-x^{b}f(x)=y^{a}-\left(m_{c}x^{b+c}+\ldots+m_{0}x^{b}\right).
\]
 The curve $C$ defined by $y^{a}-x^{b}f(x)=0$ is commonly known
as a superelliptic curve.

Advances in the field of modern algebraic geometry have brought forth
a wealth of techniques to further the study of $\mathbb{F}_{p^{r}}$-rational
points on plane curves.\footnote{A plane curve is the loci of a single bivariate polynomial equation
over a field. } We define $\#C\left(\mathbb{F}_{p^{r}}\right)$ to be the number
of $\mathbb{F}_{p^{r}}$-rational points on a smooth projective curve
$C$ and $\#J_{C}\left(\mathbb{F}_{p}\right)$ to be the number of
points in the Jacobian of $C$, an abelian group canonically associated
with the curve. In the early 1940s, André Weil proved the Riemann
hypothesis for curves over finite fields, which reveals a great deal
about the relationship between $\#C\left(\mathbb{F}_{p^{r}}\right)$
and a curve's zeta function. A direct consequence is the Hasse-Weil
bounds, which state that 
\begin{equation}
\left|p^{r}+1-\#C\left(\mathbb{F}_{p^{r}}\right)\right|\le2g\sqrt{p^{r}}\textrm{ and }\left(\sqrt{p^{r}}-1\right)^{2g}\le\#J_{C}\left(\mathbb{F}_{p^{r}}\right)\le\left(\sqrt{p^{r}}+1\right)^{2g},\label{eq:hasse bound}
\end{equation}
where $g$ is the genus of $C$ \cite{japanese}.

In this paper, we describe fast algorithms for computing $\#C\left(\mathbb{F}_{p}\right)$
and $\#J_{C}\left(\mathbb{F}_{p}\right)$ for trinomial (when $y^{a}-x^{b}f(x)$
consists of three monomials) and general superelliptic curves, even
if they are singular at the origin. To do so, we employ a matrix associated
with $C$ that encodes information about $\#C\left(\mathbb{F}_{p}\right)$
and $\#J_{C}\left(\mathbb{F}_{p}\right)$ modulo $p$ (called the
Hasse-Witt matrix) and explicitly compute its entries in terms of
multinomial coefficients. We show that these multinomial coefficients
are actually binomial for trinomial superelliptic curves, which allows
us to reduce the problem of simplifying the Hasse-Witt matrix modulo
$p$ to a problem of solving Diophantine equations of the form $x^{2}+dy^{2}=m$.
Assuming $p>16g^{2}$ and $\gcd(ac,p-1)=3,4,6\textrm{ or }8,$ we
use both this simplified Hasse-Witt matrix and the Hasse-Weil bounds
to compute $\#C\left(\mathbb{F}_{p}\right)$ probabilistically in
$O(\textrm{M}(\log p)\log p)$ time, deterministically in $O\left(\textrm{M}(\log p)\log^{2}p\log\log p\right)$
(assuming the generalized Riemann hypothesis), and deterministically
in $O\left(\textrm{M}(\log^{3}p)\frac{\log^{2}p}{\log\log p}\right)$
time, where $\textrm{M}(n)$ is the time needed to multiply two $n$-digit
numbers. Similarly, we can compute $\#J_{C}\left(\mathbb{F}_{p}\right)$
with the same running times for $g=2$ and with running time $O\left(\sqrt{p}\right)$
if $g=3$, assuming that $a$ also divides $p-1$. For general superelliptic
curves, we extend the work of Bostan et al. \cite{Boston} to simplify
the Hasse-Witt matrix modulo $p$ in $O\left(\textrm{M}\left(\sqrt{p}\log p\right)\right)$
time. Again, assuming $p>16g^{2}$ (and additionally $g=2\textrm{ or 3}$
for $\#J_{C}\left(\mathbb{F}_{p}\right)$), we use this simplified
Hasse-Witt matrix and the Hasse-Weil bounds to also compute $\#C\left(\mathbb{F}_{p}\right)$
and $\#J_{C}\left(\mathbb{F}_{p}\right)$ in $O\left(\textrm{M}\left(\sqrt{p}\log p\right)\right)$
time.

Our work has salient applications in mathematics, cryptography, and
coding theory. The Hasse-Witt matrix can be used to find the L-polynomial
of a curve, which is intimately connected to the zeta function of
a curve \textemdash{} the central object of study in the Weil-conjectures.
In addition, Fité et al.'s classification of Sato-Tate distributions
in curves of genus two required extensive back-and-forth between theoretical
work and computations of $\#C\left(\mathbb{F}_{p}\right)$ \cite{Sato-tate}.
Fast point-counting algorithms will be essential in studying the Sato-Tate
groups of curves of higher genus. Moreover, $J_{C}\left(\mathbb{F}_{p}\right)$
is a finite abelian group, so a public-key cryptosystem can be constructed
with it. Using the Jacobian of a curve of higher genus (typically
3 or 4) allows us to achieve a comparable level of security using
a smaller field size, even after taking into account known attacks
on the discrete logarithm problem for curves of genus $g>2$. For
example, hyperelliptic curve cryptosystems can have field sizes half
of those used in elliptic curve cryptosystems \cite{japanese}. To
build such cryptosystems, computing $\#J_{C}\left(\mathbb{F}_{p}\right)$
is essential to ascertain which Jacobians are cryptographically secure
in practice.\footnote{For a Jacobian to be secure, $\#J_{C}\left(\mathbb{F}_{p}\right)$
is required to be a product of a very small integer and a very large
prime.} 

The organization of this paper is as follows. In Section 2, we provide
background information about regular differentials on a curve, the
Cartier operator, and the Hasse-Witt matrix. In Section 3, we use
the Cartier operator to compute the entries of the Hasse-Witt matrix
in terms of multinomial coefficients. In Section 4, we give fast point-counting
algorithms and their time complexities for both trinomial and general
superelliptic curves. We conclude in Section 5 with a brief discussion
on future work.

\section{Background}

In this section, we recall several facts about regular differentials
on a curve, the Cartier operator, the Hasse-Witt matrix, and $\#C\left(\mathbb{F}_{p}\right)$
and $\#J_{C}\left(\mathbb{F}_{p}\right)$.

\subsection{Basic Facts }

In this paper, we consider smooth projective plane curves $C$ with
affine model $y^{a}=x^{b}f(x)=x^{b}\left(m_{c}x^{c}+m_{c-1}x^{c-1}+\ldots+m_{0}\right)$
over a finite field $\mathbb{F}_{p}$, where $p\ne2,3$ is a prime,
$y^{a}-x^{b}f(x)$ is irreducible, $f(x)\in\mathbb{F}_{p}[x],$ $a,c\ge2,b\ge0,m_{c}\ne0,m_{0}\ne0,$
and $p>a,b+c$. 

Let $P$ be the convex hull of the set of lattice points in the interior
of the Newton polygon of $g(x,y)=y^{a}-x^{b}f(x).$ If we take a side
of $P$ and label the lattice points that lie on it $0,1,\ldots,s,$
construct a ``side polynomial'' $h(t)$ with degree $s+1$ such
that the coefficient of $t^{i}$ is the coefficient of $g$ corresponding
to the lattice point $i$. 

We impose the following two conditions on our superelliptic curves:
\begin{enumerate}
\item The curves $C/\mathbb{Z}$ and $C/\mathbb{F}_{p}$ have the same genus.
\item The ``side polynomials'' are square-free.
\end{enumerate}
We denote the set of points on a curve $C$ over the field $\mathbb{F}_{p}$
as $C\left(\mathbb{F}_{p}\right)$ and the Jacobian variety as $J_{C}\left(\mathbb{F}_{p}\right)$. 

\subsection{Regular Differentials and the Cartier Operator}

Given a smooth projective variety $C$ with coordinate ring $k\left[C\right]$,
we can define $\Omega\left(C\right)$, the $k\left[C\right]$-module
of regular differentials as elements of the form $\sum_{i}g_{i}df_{i}$
with $f_{i},g_{i}\in k\left[C\right]$ subject to
\begin{itemize}
\item $d\left(f+g\right)=df+dg$;
\item $d\left(fg\right)=fdg+gdf$;
\item $da=0$ if $a\in k$.
\end{itemize}
It is worth noting that $\Omega\left(C\right)$ is a $g$-dimensional
vector space over the field $k,$ where $g$ is the genus of $C$.

Let $\omega$ be in $\Omega\left(C\right)$, where $p$ is the prime
corresponding to the finite field $\mathbb{F}_{p}$ over which $C$
is defined. One can always find $a_{0},a_{1},\ldots,a_{p-1}$ such
that $\omega=\left(a_{0}^{p}+a_{1}^{p}x+\ldots+a_{p-1}^{p}x^{p-1}\right)dx$
\cite{cartier}. We can then define the Cartier operator as follows.
\begin{defn}
The Cartier operator $\mathscr{C}$ is a map from $\Omega\left(C\right)$
to itself such that 
\[
\mathcal{\mathscr{C}\left(\omega\right)}=a_{p-1}dx.
\]

The following are useful properties of the Cartier operator that we
use later in writing down the Hasse-Witt matrix \cite{Cartier properties}.
\end{defn}
\begin{lem}
\label{lem:cartier lem}If $F$ is the function field of $C,$ then
for all $\omega\in\Omega\left(C\right)$ and $f\in F/\mathbb{F}_{p},$
\end{lem}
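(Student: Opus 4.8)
The plan is to reduce every assertion to a single computation of $\mathscr{C}$ on monomial differentials and then extend by the semilinearity that is built into the definition. First I would establish the basic monomial formula: using a separating variable $x$, write an integer exponent as $m = kp + r$ with $0 \le r < p$, so that $x^{m} = (x^{k})^{p}x^{r}$ and in the expansion $\omega = \left(a_{0}^{p}+a_{1}^{p}x+\cdots+a_{p-1}^{p}x^{p-1}\right)dx$ the only nonzero coefficient is $a_{r}=x^{k}$. Reading off $a_{p-1}$ from the definition $\mathscr{C}(\omega)=a_{p-1}\,dx$ yields
\[
\mathscr{C}\!\left(x^{m}\,dx\right)=\begin{cases} x^{k}\,dx & \text{if } r = p-1,\\[2pt] 0 & \text{otherwise.}\end{cases}
\]
This one formula is the engine for the entire lemma.

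For additivity I would note that the coefficients in the expansion of a sum are obtained by adding the corresponding $p$-th powers term by term, so extracting $a_{p-1}$ is additive and $\mathscr{C}(\omega_{1}+\omega_{2})=\mathscr{C}(\omega_{1})+\mathscr{C}(\omega_{2})$ is immediate. For the semilinearity identity $\mathscr{C}(f^{p}\omega)=f\,\mathscr{C}(\omega)$, I would take $\omega=\left(\sum_{i}a_{i}^{p}x^{i}\right)dx$ and observe that $f^{p}\omega=\left(\sum_{i}(f a_{i})^{p}x^{i}\right)dx$ is already in reduced form, so its coefficient of $x^{p-1}$ is $(f a_{p-1})^{p}$ and therefore $\mathscr{C}(f^{p}\omega)=f a_{p-1}\,dx=f\,\mathscr{C}(\omega)$. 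To prove $\mathscr{C}(df)=0$, I would expand $f=\sum_{i}c_{i}x^{i}$ locally, differentiate to get $df=\sum_{i} i\,c_{i}x^{i-1}\,dx$, and apply the monomial formula: a term survives only when $i-1\equiv -1\pmod p$, i.e. $p\mid i$, in which case the coefficient $i\,c_{i}$ vanishes in characteristic $p$. The logarithmic identity $\mathscr{C}(df/f)=df/f$, if it is among the stated properties, then follows by writing $f^{p-1}\,df=f^{p}\,(df/f)$ and combining the two identities just proved.

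The main obstacle is not any individual computation but justifying that these local arguments are legitimate and globally consistent. Concretely, I would need the expansion $\omega=\left(\sum a_{i}^{p}x^{i}\right)dx$ to exist with \emph{unique} coefficients, which rests on $\{1,x,\dots,x^{p-1}\}$ being a basis of the function field $F$ over its subfield $F^{p}$ for a separating variable $x$; this is precisely where perfectness of $\mathbb{F}_{p}$ (bijectivity of Frobenius on the constants) enters. I would also have to confirm that $\mathscr{C}(\omega)$ does not depend on the chosen separating variable, so that the operator is well defined on all of $\Omega(C)$ rather than only on an affine chart. Granting the well-definedness asserted when $\mathscr{C}$ was introduced, each property above is stable under change of coordinate because it is phrased intrinsically in terms of $\omega$ and $f$; with those points secured, all three identities reduce to the monomial formula and the short manipulations sketched above.
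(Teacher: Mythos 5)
Your proof is correct in substance, but note that the paper itself does not prove this lemma at all: both properties are quoted from the literature \cite{Cartier properties}. So your argument is not a variant of the paper's proof; it is a self-contained replacement for a citation. The core of your argument is the standard one and is sound: uniqueness of the expansion $\omega=\left(\sum_{i<p}a_{i}^{p}x^{i}\right)dx$ because $\{1,x,\ldots,x^{p-1}\}$ is an $F^{p}$-basis of $F$, the monomial computation for $\mathscr{C}(x^{m}\,dx)$, and semilinearity read off from $f^{p}a_{i}^{p}=(fa_{i})^{p}$. Moreover, the hypotheses of the paper secure the one point you flagged as the main obstacle: $x$ is a separating variable here because $p>a$ makes $y^{a}-x^{b}f(x)$ separable in $y$, hence $F/\mathbb{F}_{p}(x)$ is separable, $F=F^{p}(x)$, and $[F:F^{p}]=p$, which is exactly what the basis claim needs. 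What your route buys is that the paper becomes self-contained; what the citation buys is brevity and the deferral of well-definedness questions (coordinate independence of $\mathscr{C}$) to the literature.

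Two mismatches with the actual statement, which you could not see in full. First, the lemma's second property concerns the iterate $\mathscr{C}^{n}(x^{j}dx)$ for arbitrary $n$, namely that it vanishes unless $p^{n}\mid j+1$, in which case it equals $x^{(j+1)/p^{n}-1}dx$; you prove only $n=1$. The missing step is a two-line induction: if $p\nmid j+1$ both sides vanish, and if $p\mid j+1$ then $\mathscr{C}^{n}(x^{j}dx)=\mathscr{C}^{n-1}\left(x^{(j+1)/p-1}dx\right)$, where $p^{n-1}\mid(j+1)/p$ if and only if $p^{n}\mid j+1$, and the resulting exponent is $\bigl((j+1)/p\bigr)/p^{n-1}-1=(j+1)/p^{n}-1$. (The paper only ever invokes the case $n=1$, in the proof of Theorem \ref{thm:The-Hasse-Witt-matrix}.) Second, the properties $\mathscr{C}(df)=0$ and $\mathscr{C}(df/f)=df/f$ are not part of the lemma, so those paragraphs are harmless but unnecessary; they are also the only place where your argument gets delicate, since applying $\mathscr{C}$ termwise to an infinite local expansion needs justification. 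If you want to keep $\mathscr{C}(df)=0$, prove it from the global expansion instead: writing $f=\sum_{i<p}f_{i}^{p}x^{i}$ gives $df=\sum_{i}(if_{i})^{p}x^{i-1}dx$ (using $i^{p}=i$ in $\mathbb{F}_{p}$), and no exponent $i-1$ equals $p-1$, so the $x^{p-1}$ coefficient vanishes.
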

\begin{itemize}
\item $\mathscr{C}(f^{p}\omega)=f\mathscr{C}(\omega)$;
\item $\mathscr{C}^{n}\left(x^{j}dx\right)=\begin{cases}
0, & \textrm{if }p^{n}\nmid j+1;\\
x^{(j+1)/p^{n}-1}, & \textrm{otherwise}.
\end{cases}$
\end{itemize}

\subsection{Finding $\#C\left(\mathbb{F}_{p}\right)$ and $\#J_{C}\left(\mathbb{F}_{p}\right)$
with the Hasse-Witt Matrix}

The Hasse-Witt matrix is the matrix associated with a smooth curve
over some finite field and the action of the Frobenius endomorphism
with respect to a basis of regular differentials for $\Omega\left(C\right)$. 
\begin{defn}
Let $\omega_{1},\ldots,\omega_{g}$ be a basis for $\Omega\left(C\right)$.
Then, define the Hasse-Witt matrix $A_{p}\left(C\right)$ as $\left[a_{i,j}^{1/p}\right]$,
where 
\[
\mathscr{C}\left(\omega_{i}\right)=\sum_{j=1}^{g}a_{i,j}\omega_{j}.
\]
\end{defn}
Since we are dealing with only finite fields with prime order, matrix
$\left[a_{i,j}^{1/p}\right]$ is the same as matrix $\left[a_{i,j}\right]$
by Fermat's little theorem. The following theorem shows how this matrix
encodes information about $\#C\left(\mathbb{F}_{p}\right)$ and $\#J_{C}\left(\mathbb{F}_{p}\right)$
\cite{Manin}.
\begin{thm}
Let $A_{p}\left(C\right)$ be the Hasse-Witt matrix of a smooth curve
$C$ and $\chi_{p}\left(t\right)$ be the characteristic polynomial
of the Frobenius endomorphism. Then 
\[
\chi_{p}\left(t\right)\equiv\left(-1\right)^{g}t^{g}\det\left(A_{p}(C)-tI\right)\pmod*{p},
\]
where $I$ is the identity matrix.
\end{thm}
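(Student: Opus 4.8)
The plan is to realize both sides as characteristic polynomials of a single operator on a $2g$-dimensional space and then exploit the Dieudonn\'e relation $FV=VF=p$. By definition $\chi_{p}(t)$ is the characteristic polynomial of the Frobenius endomorphism $\pi$ of the Jacobian $J_{C}$ acting on the $\ell$-adic Tate module for $\ell\ne p$. Since the reduced characteristic polynomial of $\pi\in\mathrm{End}(J_{C})$ is independent of the chosen faithful representation, $\chi_{p}(t)$ equally computes the characteristic polynomial of $\pi$ on the Dieudonn\'e module $M$ of the $p$-divisible group $J_{C}[p^{\infty}]$. Because the base field is the prime field $\mathbb{F}_{p}$, the module $M$ is free of rank $2g$ over $W(\mathbb{F}_{p})=\mathbb{Z}_{p}$, the Witt-vector Frobenius is the identity, and $\pi$ acts as the Dieudonn\'e Frobenius $F$, which satisfies $FV=VF=p$.

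First I would reduce modulo $p$. Writing $\bar{M}=M/pM$, the characteristic polynomial of $\bar{F}$ on this $2g$-dimensional $\mathbb{F}_{p}$-space is exactly $\chi_{p}(t)\bmod p$, and the relation $FV=p$ becomes $\bar{F}\,\bar{V}=0$. The key structural input, which I would take from Dieudonn\'e theory (equivalently, from the crystalline--de Rham comparison), is the identification $\bar{M}\cong H^{1}_{\mathrm{dR}}(C/\mathbb{F}_{p})$ under which the Hodge filtration piece equals the space of regular differentials $\Omega(C)=H^{0}(C,\Omega^{1})$, a $g$-dimensional subspace. On a curve the Hodge--de Rham spectral sequence degenerates and the crystalline Frobenius is divisible by $p$ on this filtration, so $\bar{F}$ annihilates $\Omega(C)$ and induces on the quotient $\bar{M}/\Omega(C)\cong H^{1}(C,\mathcal{O}_{C})$ precisely the Frobenius whose matrix is the Hasse-Witt matrix.

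Next I would read off the characteristic polynomial from a block-triangular form. Choosing a basis of $\bar{M}$ adapted to the inclusion $\Omega(C)\subset\bar{M}$, the operator $\bar{F}$ kills the first $g$ basis vectors and induces $A_{p}(C)$ on the quotient, so its matrix is
\[
\begin{pmatrix}0 & B\\ 0 & A_{p}(C)\end{pmatrix},
\]
whence $\det\bigl(tI-\bar{F}\bigr)=t^{g}\det\bigl(tI-A_{p}(C)\bigr)=(-1)^{g}t^{g}\det\bigl(A_{p}(C)-tI\bigr)$, which is exactly the claimed congruence. One bookkeeping point I would address is that the paper defines $A_{p}(C)$ through the Cartier operator $\mathscr{C}$ on $\Omega(C)$ rather than through Frobenius on $H^{1}(C,\mathcal{O}_{C})$; since $\mathscr{C}$ is the Serre-dual of that Frobenius, the two matrices are transposes of one another, and transposition leaves $\det\bigl(tI-A_{p}(C)\bigr)$ unchanged.

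The hard part will be the structural identification of the second paragraph --- that $\bar{M}$ carries a de Rham/Dieudonn\'e structure compatible with $\pi$ and that Frobenius is divisible by $p$ on the differentials $\Omega(C)$. This is the substantive content of Manin's theorem \cite{Manin}, and a fully self-contained treatment would require developing the Cartier--Dieudonn\'e correspondence for the Jacobian (or, in Manin's original style, a direct computation with the Cartier operator and the action of Frobenius on the $p$-torsion of $J_{C}$); once that dictionary is in place, the remaining steps are the formal linear algebra above.
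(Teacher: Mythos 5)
The paper never proves this theorem: it is stated as background and attributed to Manin \cite{Manin}, so there is no internal proof to compare yours against; what follows assesses your argument on its own terms. Your outline is the standard modern route to Manin's result, and its linear-algebraic core is sound: crystalline Frobenius is divisible by $p$ on the Hodge piece $H^{0}(C,\Omega^{1})\subset H^{1}_{\mathrm{dR}}(C/\mathbb{F}_{p})$, so the reduction $\bar{F}$ is block-triangular with a zero block of size $g$ and the Hasse-Witt action on the quotient $H^{1}(C,\mathcal{O}_{C})$, giving $\det\left(tI-\bar{F}\right)=t^{g}\det\left(tI-A_{p}(C)\right)=(-1)^{g}t^{g}\det\left(A_{p}(C)-tI\right)$, which is the claimed congruence. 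The Serre-duality bookkeeping relating the Cartier matrix on $\Omega(C)$ to the transpose of the Frobenius matrix on $H^{1}(C,\mathcal{O}_{C})$ is also handled correctly, and over the prime field the semilinearity issues do disappear as you say.

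The one step whose justification, as written, would fail is the transfer from the Tate module to the Dieudonn\'e module. ``The characteristic polynomial of $\pi$ is independent of the chosen faithful representation'' is not a valid general principle: when $\mathrm{End}^{0}(J_{C})$ is semisimple but not simple (e.g.\ a product of fields, which certainly occurs for Jacobians), two faithful representations of the same dimension can assign different characteristic polynomials to the same element, because the simple factors can occur with different multiplicities --- compare $(a,b)\mapsto\mathrm{diag}(a,a,a,b)$ with $(a,b)\mapsto\mathrm{diag}(a,a,b,b)$ for $\mathbb{Q}\times\mathbb{Q}$. The fact that the characteristic polynomial of $F$ on the rational Dieudonn\'e module of $J_{C}[p^{\infty}]$ coincides with the $\ell$-adic $\chi_{p}(t)$ is a genuine theorem (proved by degree/valuation arguments as in Demazure or Milne--Waterhouse, or via Katz--Messing in cohomological language), not a formal consequence of faithfulness, and must be cited or proved as such. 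Together with Oda's identification of $H^{1}_{\mathrm{dR}}(C/\mathbb{F}_{p})$ with the Dieudonn\'e module of $J_{C}[p]$ compatibly with the Hodge filtration --- which you do flag honestly as the hard structural input --- these are the two places where the real content lives; with them granted, the rest of your argument is complete.
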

It is well known that $\chi_{p}\left(t\right)$ is a polynomial of
degree twice that of the genus $g$ and that the coefficient of $x^{2g-1}$
in $\chi_{p}\left(t\right)$ is simply the negative of the trace of
$A_{p}\left(C\right)$ by the Cayley\textendash Hamilton theorem.
The trace of Frobenius, $\textrm{tr}\left(A_{p}\left(C\right)\right),$
satisfies
\[
\textrm{tr}\left(A_{p}\left(C\right)\right)=p+1-\#C\left(\mathbb{F}_{p}\right).
\]
We can thus compute $\#C\left(\mathbb{F}_{p}\right)$ and $\#J_{C}\left(\mathbb{F}_{p}\right)$
modulo $p$ by noting that 
\begin{equation}
\#C\left(\mathbb{F}_{p}\right)\equiv1-\textrm{tr}\left(A_{p}\left(C\right)\right)\pmod*{p}\label{eq:C}
\end{equation}
and 
\begin{equation}
\#J_{C}\left(\mathbb{F}_{p}\right)=\chi_{p}\left(1\right)\equiv\left(-1\right)^{g}\det\left(A_{p}(C)-I\right)\pmod*{p}.\label{eq:J}
\end{equation}
If $p$ is sufficiently large, we can then uniquely determine $\#C\left(\mathbb{F}_{p}\right)$
and narrow down the candidates for $\#J_{C}\left(\mathbb{F}_{p}\right)$
using the Hasse-Weil bounds.

\section{Computing Hasse-Witt Matrices\label{sec:Computing-Hasse-Witt-Matrices}}

In this section, we express the entries of the Hasse-Witt matrix of
the superelliptic curve 
\begin{equation}
C:y^{a}=x^{b}f(x)=x^{b}\left(m_{c}x^{c}+m_{c-1}x^{c-1}+\ldots+m_{0}\right)\label{eq:superelliptic}
\end{equation}
over $\mathbb{F}_{p}$ explicitly as a sum of multinomial coefficients,
where $p$ is some prime bigger than $a$ and $b+c$, $f(0)\ne0$,
and $\deg(f)=c$. It is well-known \cite{RS book} that 
\[
\left\{ \omega_{i,j}=\dfrac{x^{i-1}y^{j-1}}{y^{a-1}}\middle|(i,j)\in\mathcal{N}\right\} 
\]
is a basis of regular differentials for $\Omega\left(C\right),$ where
$\mathcal{N}$ is the set of lattice points in the interior of the
Newton polygon of $y^{a}-x^{b}f(x)$:

\[
\mathcal{N}=\left\{ (i,j)\middle|\begin{array}{c}
1\le i\le b\textrm{ and }\left\lfloor -\dfrac{a}{b}i+a\right\rfloor +1\le j\le\left\lceil -\dfrac{a}{b+c}i+a\right\rceil -1\\
\textrm{or}\\
b+1\le i\le b+c-1\textrm{ and }1\le j\le\left\lceil -\dfrac{a}{b+c}i+a\right\rceil -1
\end{array}\right\} .
\]

The following lemmas will be used in our proof of Theorem \ref{thm:The-Hasse-Witt-matrix}.
\begin{lem}
\label{lem:1}If $\gcd(p,a)=1$ and $1\le j\le a$, there exist unique
integers $u_{j}$ and $v_{j}$ such that $j-1+(a-1)(p-1)=p(u_{j}-1)+av_{j}$,
where $1\le u_{j}\le a$ and $0\le v_{j}\le p-1$.
\end{lem}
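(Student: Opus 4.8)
The plan is to reduce the stated identity to a single congruence and then pin down the quotient by size estimates. Set $N = j-1+(a-1)(p-1)$. The requested equation $N = p(u_j-1)+av_j$ is equivalent to the two conditions $av_j \equiv N \pmod{p}$ together with $u_j - 1 = (N - av_j)/p$. I would first invoke $\gcd(a,p)=1$: the map $v \mapsto av \bmod p$ is a bijection on $\{0,1,\ldots,p-1\}$, so there is exactly one $v_j$ in this range with $av_j \equiv N \pmod{p}$. This simultaneously gives the existence and uniqueness of $v_j$ and guarantees that $p \mid N - av_j$, so that $u_j := 1 + (N - av_j)/p$ is a well-defined integer. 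Uniqueness of $u_j$ is then automatic, since it is determined by $v_j$ through the identity.

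It remains to verify $1 \le u_j \le a$, which I would obtain purely from bounding $N$ and $av_j$. From $1 \le j \le a$ we have $0 \le j-1 \le a-1$, hence
\[
(a-1)(p-1) \le N \le (a-1)p,
\]
while $0 \le v_j \le p-1$ gives $0 \le av_j \le a(p-1)$. For the upper bound, $N - av_j \le N \le (a-1)p$, so $u_j - 1 \le a-1$ and therefore $u_j \le a$. For the lower bound, $N - av_j \ge (a-1)(p-1) - a(p-1) = -(p-1)$, so $u_j - 1 \ge -(p-1)/p > -1$.

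I expect the lower bound to be the only real subtlety: the estimate alone yields $u_j - 1 > -1$ rather than $u_j - 1 \ge 0$, and it is integrality of $u_j$ that closes the gap and forces $u_j \ge 1$. Everything else is a direct consequence of the bijectivity of multiplication by $a$ modulo $p$, so once the bounds on $N$ are recorded the argument is essentially mechanical.
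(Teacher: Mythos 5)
Your proposal is correct and follows essentially the same route as the paper: solve the congruence $av_j \equiv j-1+(a-1)(p-1) \pmod{p}$ using $\gcd(a,p)=1$ to pin down $v_j$ uniquely, define $u_j$ from the equation, and verify $1 \le u_j \le a$ by size estimates. The only cosmetic difference is at the lower bound, where the paper uses the congruence together with $av_j \le a(p-1)$ to conclude $av_j \le j-1+(a-1)(p-1)$ outright, while you divide by $p$ first and invoke integrality of $u_j$ to upgrade $u_j > 0$ to $u_j \ge 1$; the two arguments use identical ingredients.
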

\begin{proof}
Consider the Diophantine equation $p(x-1)+ay=j-1+(a-1)(p-1)$. If
we take this equation modulo $p$, we get $y\equiv a^{-1}\left(j-1+\left(p-1\right)\left(a-1\right)\right)\pmod*{p}.$
Let $v_{j}$ be the unique integer such that $v_{j}\equiv y\pmod*{p}$
and $0\le v_{j}\le p-1$. Then,  it remains to show that $u_{j}=\frac{j-1+(a-1)(p-1)-av_{j}}{p}+1$
is between $1$ and $a$, inclusive. Note that $av_{j}\equiv j-1+(a-1)(p-1)\pmod*{p}$
and $0\le av_{j}\le ap-a$, so $av_{j}\le j-1+(a-1)(p-1)$. Thus,
$u_{j}\ge1$. Also, since $j\le a$, we have $j-1+(a-1)(p-1)-av_{j}\le pa-p$,
so $u_{j}\le a$. 
\end{proof}
\begin{rem}
It turns out that the $u_{j}$ are pairwise distinct, i.e. $u_{j}=u_{j'}\Rightarrow j=j'$.
To see this, note that $u_{j}=u_{j'}$ implies that $a\left(v_{j}-v_{j'}\right)=j-j',$
so $a|j-j'$. However, $1\le j,j'\le a-1$, so $0\le j-j'\le a-2$.
Thus, $j=j'$. 
\end{rem}
\begin{defn}
\begin{doublespace}
Let $(i,j)\in\mathcal{N}$ and $k_{0},k_{1},\ldots$, and $k_{c}$
be integers such that $k_{0}+k_{1}+\ldots+k_{c}=v_{j},0\le k_{0},\ldots,k_{c}\le v_{j},$
and $p|bv_{j}+ck_{c}+(c-1)k_{c-1}+\ldots+k_{1}+i$. Then, define
\end{doublespace}

\[
s_{i,j}(k_{0},\ldots,k_{c}):=\dfrac{bv_{j}+ck_{c}+(c-1)k_{c-1}+\ldots+k_{1}+i}{p}.
\]
\end{defn}
\begin{lem}
\label{lem:2}$u_{j}\le\left\lceil -\frac{a}{b+c}s_{i,j}(k_{0},\ldots,k_{c})+a\right\rceil -1$.
\end{lem}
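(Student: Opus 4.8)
The plan is to peel off the ceiling on the right-hand side and reduce the claimed bound to the very condition that places $(i,j)$ in the interior region $\mathcal{N}$. Write $s = s_{i,j}(k_0,\ldots,k_c)$ for brevity, so that by definition $ps = bv_j + ck_c + (c-1)k_{c-1} + \ldots + k_1 + i$. Since $u_j$ is an integer, the elementary ceiling identity $\lceil X\rceil \ge n \iff X > n-1$ shows that the target inequality $u_j \le \left\lceil -\frac{a}{b+c}s + a\right\rceil - 1$ is equivalent to the strict inequality $-\frac{a}{b+c}s + a > u_j$, i.e.\ (clearing denominators by $p(b+c)/a$) to
\[
aps < p(a - u_j)(b+c).
\]
So everything reduces to an upper bound on $ps$, and hence on the weighted sum of the $k_i$.

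First I would bound that weighted sum. Because $k_0,\ldots,k_c \ge 0$ and each multiplier is at most $c$, the constraint $k_0 + \ldots + k_c = v_j$ gives $ck_c + (c-1)k_{c-1} + \ldots + k_1 \le c(k_0 + \ldots + k_c) = c\,v_j$. Hence
\[
ps \le (b+c)v_j + i, \qquad\text{so}\qquad aps \le a(b+c)v_j + ai.
\]
This single non-strict inequality is the only estimate in the chain; everything following it is an exact identity, and the strictness we ultimately need will be supplied at the last step, so the non-strict sense here is harmless.

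Next I would substitute the defining relation from Lemma \ref{lem:1}, namely $av_j = j - 1 + (a-1)(p-1) - p(u_j-1)$, into $a(b+c)v_j + ai$ and simplify. Collecting terms — the identity $(a-1)(p-1) + p = ap - a + 1$ is the convenient one — one finds that the bulk of the expression matches $p(a-u_j)(b+c)$, and the desired inequality $a(b+c)v_j + ai < p(a-u_j)(b+c)$ collapses to the clean requirement $ai < (b+c)(a-j)$. Finally, I would invoke membership in $\mathcal{N}$: the constraint $j \le \left\lceil -\frac{a}{b+c}i + a\right\rceil - 1$ is, again by the ceiling identity, exactly $j < -\frac{a}{b+c}i + a$, which rearranges precisely to the strict inequality $ai < (b+c)(a-j)$. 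Chaining $aps \le a(b+c)v_j + ai < p(a-u_j)(b+c)$ then yields the required strict bound.

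The only real obstacle is the bookkeeping in the substitution step: one must verify that, after inserting the Lemma \ref{lem:1} relation, the reduction lands \emph{exactly} on the Newton-polygon condition $ai < (b+c)(a-j)$ rather than on a slightly weaker or stronger statement, and that the strictness coming from $\mathcal{N}$ correctly propagates through the non-strict estimate $ps \le (b+c)v_j + i$. The conceptual content, however, is simply that the weighted sum $\sum_i i\,k_i$ is maximized coefficient-by-coefficient at $c\,v_j$, after which the claim is forced by the shape of $\mathcal{N}$.
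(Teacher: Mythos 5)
Your proof is correct and follows essentially the same route as the paper's: both arguments bound the weighted sum $ck_{c}+(c-1)k_{c-1}+\cdots+k_{1}$ by $cv_{j}$, invoke the Newton-polygon membership condition $j\le\left\lceil -\frac{a}{b+c}i+a\right\rceil -1$, and combine Lemma \ref{lem:1} with the integrality of $u_{j}$ to convert between strict and ceiling inequalities. The only difference is bookkeeping: the paper manipulates floors and fractional parts, whereas you clear denominators and reduce everything to the clean condition $ai<(b+c)(a-j)$, which is an equivalent (and arguably tidier) way of organizing the same estimates.
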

\begin{proof}
Note that $\left\lfloor \frac{a}{b+c}s_{i,j}(k_{0},\ldots,k_{c})\right\rfloor +\frac{j}{p}=\left\lfloor \frac{a}{b+c}\frac{bv_{j}+ck_{c}+(c-1)k_{c-1}+\ldots+k_{1}+i}{p}\right\rfloor +\frac{j}{p}\le\left\lfloor \frac{a\left((b+c)v_{j}+i\right)}{p(b+c)}\right\rfloor +\frac{a-\left\lfloor \frac{ai}{b+c}\right\rfloor -1}{p}$\\
$\le\frac{av_{j}+a+\left\{ \frac{ai}{b+c}\right\} -1}{p},$ so $\left\lfloor \frac{a}{b+c}s_{i,j}(k_{0},\ldots,k_{c})\right\rfloor \le\frac{av_{j}+a-j+\left\{ \frac{ai}{b+c}\right\} -1}{p}<\frac{av_{j}+a-j}{p}.$
However, $\left\lfloor \frac{a}{b+c}s_{i,j}(k_{0},\ldots,k_{c})\right\rfloor $
and $\frac{av_{j}+a-j}{p}=a-u_{j}$ are both integers, so $\left\lfloor \frac{a}{b+c}s_{i,j}(k_{0},\ldots,k_{c})\right\rfloor \le\frac{av_{j}+a-j}{p}-1.$
Using the fact that $-\left\lfloor x\right\rfloor =\left\lceil -x\right\rceil $,
we find that $u_{j}=\frac{j-av_{j}-a}{p}+a\le\left\lceil -\frac{a}{b+c}s_{i,j}(k_{0},\ldots,k_{c})+a\right\rceil -1,$
as desired. 
\end{proof}
\begin{lem}
\label{lem:3}$\left\lfloor -\frac{a}{b}s_{i,j}(k_{0},\ldots,k_{c})+a\right\rfloor +1\le u_{j}$
for $1\le s_{i,j}(k_{0},\ldots,k_{c})\le b$.
\end{lem}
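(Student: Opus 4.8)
The plan is to follow the same strategy as in the proof of Lemma~\ref{lem:2}, converting the floor inequality into an elementary algebraic one. First I would use that $u_j$ is an integer to rewrite the claim $\lfloor -\frac{a}{b}s_{i,j}(k_0,\ldots,k_c)+a\rfloor+1\le u_j$ equivalently as $\lfloor a-\frac{a}{b}s_{i,j}(k_0,\ldots,k_c)\rfloor\le u_j-1$, and then, since $u_j-1$ is itself an integer, as the strict inequality $a-\frac{a}{b}s_{i,j}(k_0,\ldots,k_c)<u_j$, i.e. $\frac{a}{b}s_{i,j}(k_0,\ldots,k_c)>a-u_j$. This is the analogue of the opening move in Lemma~\ref{lem:2}, just at the other endpoint of the admissible $j$-range.

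Next I would substitute closed forms for both sides. From Lemma~\ref{lem:1} one has $u_j=a+\frac{j-a-av_j}{p}$, so $a-u_j=\frac{a(v_j+1)-j}{p}$. Expanding $s_{i,j}(k_0,\ldots,k_c)$ by its definition and multiplying the target inequality through by the positive quantity $bp/a$ reduces it to $bv_j+ck_c+(c-1)k_{c-1}+\ldots+k_1+i>bv_j+b-\frac{bj}{a}$, that is, to $ck_c+(c-1)k_{c-1}+\ldots+k_1+i>\frac{b(a-j)}{a}$. Since every $k_\ell\ge 0$, it then suffices to establish the cleaner bound $i>\frac{b(a-j)}{a}$.

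The crux is that this last inequality is essentially the defining lower-endpoint condition for $(i,j)$ to lie in $\mathcal{N}$. If $(i,j)$ lies in the first region of $\mathcal{N}$, then $j\ge\lfloor -\frac{a}{b}i+a\rfloor+1>-\frac{a}{b}i+a$, which rearranges directly to $i>\frac{b(a-j)}{a}$; if it lies in the second region, then $i\ge b+1$ while $j\ge 1$ forces $\frac{b(a-j)}{a}<b<i$. I expect the only genuine care needed is (i)~tracking strictness through the floor manipulation, since the conclusion carries the equality-excluding $+1$, and (ii)~checking the bound uniformly across both regions of $\mathcal{N}$ rather than just the first. I would also note that the hypothesis $1\le s_{i,j}(k_0,\ldots,k_c)\le b$ appears to be needed only to guarantee that the image point $(s_{i,j}(k_0,\ldots,k_c),u_j)$ falls into the first region of $\mathcal{N}$, so that this lower bound is the relevant one; it does not enter the inequality itself.
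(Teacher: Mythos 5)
Your proof is correct and takes essentially the same route as the paper's: both arguments reduce the floor inequality via integrality to a strict bound equivalent to $\tfrac{a}{b}s_{i,j}(k_{0},\ldots,k_{c})>a-u_{j}$, discard the nonnegative terms $k_{1},\ldots,k_{c}$, and then verify the key inequality $i>\tfrac{b(a-j)}{a}$ by splitting into the two regions of $\mathcal{N}$ (the paper encodes this same fact as $a-j\le\left\lceil \tfrac{ai}{b}\right\rceil -1<\tfrac{ai}{b}$). Your closing observation is also accurate: the hypothesis $1\le s_{i,j}(k_{0},\ldots,k_{c})\le b$ plays no role in the paper's proof either, serving only to indicate when this lower bound is the one needed for membership in $\mathcal{N}$.
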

\begin{proof}
We consider two cases: $1\le i\le b$ and $b+1\le i\le b+c-1$. If
$1\le i\le b$, note that ${\textstyle \frac{av_{j}+a-j}{p}\le\frac{av_{j}+\left\lceil \frac{ai}{b}\right\rceil -1}{p}}$\\
$<\frac{av_{j}+\frac{ai}{b}}{p}\le\frac{a(bv_{j}+ck_{c}+(c-1)k_{c-1}+\ldots+k_{1}+i)}{pb}=\left\lceil \frac{a}{b}s_{i,j}(k_{0},\ldots,k_{c})\right\rceil .$
Analogously, if $b+1\le i\le b+c-1$, we have $\frac{av_{j}+a-j}{p}<\left\lceil \frac{a}{b}s_{i,j}(k_{0},\ldots,k_{c})\right\rceil .$
In both situations, since $\frac{av_{j}+a-j}{p}$ and $\left\lceil \frac{a}{b}s_{i,j}(k_{0},\ldots,k_{c})\right\rceil $
are integers, we have $\left\lceil \frac{a}{b}s_{i,j}(k_{0},\ldots,k_{c})\right\rceil \ge\frac{av_{j}+a-j}{p}+1.$
Thus $u_{j}=\frac{j-av_{j}-a}{p}+a\ge\left\lfloor -\frac{a}{b}s_{i,j}(k_{0},\ldots,k_{c})+a\right\rfloor +1,$
as desired.
\end{proof}
\begin{lem}
\label{lem:4}$1\le u_{j}$ for $c+1\le s_{i,j}(k_{0},\ldots,k_{c})\le b+c-1$.
\end{lem}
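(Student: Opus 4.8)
The plan is to recognize that the conclusion $1\le u_{j}$ is essentially already contained in Lemma \ref{lem:1}, and that the hypothesis $c+1\le s_{i,j}(k_{0},\ldots,k_{c})\le b+c-1$ plays no role in the inequality itself: its purpose is only to certify that the pair $\left(s_{i,j}(k_{0},\ldots,k_{c}),u_{j}\right)$ lands in the second region of $\mathcal{N}$, where the lower constraint on the second coordinate degenerates to $\ge 1$. So I would reduce the statement to the unconditional bound $u_{j}\ge 1$.

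To make this self-contained I would start from the explicit expression for $u_{j}$ already used in the proofs of Lemmas \ref{lem:2} and \ref{lem:3}, namely $u_{j}=\frac{j-av_{j}-a}{p}+a=\frac{ap-a+j-av_{j}}{p}$, obtained by solving the defining relation $j-1+(a-1)(p-1)=p(u_{j}-1)+av_{j}$ of Lemma \ref{lem:1} for $u_{j}$. By that same lemma, $u_{j}$ is an integer and $0\le v_{j}\le p-1$, while $(i,j)\in\mathcal{N}$ forces $j\ge 1$. Then $av_{j}\le a(p-1)=ap-a$, so the numerator satisfies $ap-a+j-av_{j}\ge j\ge 1>0$; hence $u_{j}>0$, and since $u_{j}$ is an integer this already gives $u_{j}\ge 1$.

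I do not expect a genuine obstacle here. The only point requiring a little care is to invoke the integrality of $u_{j}$: the crude bound on the numerator only yields $u_{j}\ge \frac{1}{p}$, and one must combine this with $u_{j}\in\mathbb{Z}$ to conclude $u_{j}\ge 1$ rather than attempting to prove the sharper numerator estimate $ap-a+j-av_{j}\ge p$ directly. Alternatively, one could simply quote the inequality $1\le u_{j}$ from Lemma \ref{lem:1} verbatim, since that lemma already establishes $1\le u_{j}\le a$ for every $j$ with $1\le j\le a$, which covers all second coordinates occurring in $\mathcal{N}$.
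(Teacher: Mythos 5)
Your proposal is correct and matches the paper's own proof essentially verbatim: the paper also writes $u_{j}=\frac{j-av_{j}-a}{p}+a\ge\frac{j-a(p-1)-a}{p}+a=\frac{j}{p}>0$ using $v_{j}\le p-1$ and then invokes the integrality of $u_{j}$ to conclude $u_{j}\ge1$. Your observation that the bound already follows from Lemma \ref{lem:1}, with the hypothesis on $s_{i,j}(k_{0},\ldots,k_{c})$ serving only to place the pair in the second region of $\mathcal{N}$, is also accurate.
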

\begin{proof}
Note that $u_{j}=\frac{j-av_{j}-a}{p}+a\ge\frac{j-a(p-1)-a}{p}+a=\frac{j}{p}>0.$
We know $u_{j}$ is an integer, so $u_{j}\ge1$, as desired.
\end{proof}
Let $a_{\left(i,j\right),\left(i',j'\right)}$ be the entry of Hasse-Witt
matrix corresponding to coefficient of $\omega_{i',j'}$ when $\mathscr{C}\left(\omega_{i,j}\right)$
is written as a linear combination of the basis elements. In particular,
$a_{\left(i,j\right),\left(i,j\right)}$ would refer to a diagonal
entry on the Hasse-Witt matrix. We adopt the following notation for
convenience. 
\begin{defn}
\begin{singlespace}
Define 
\[
\begin{bmatrix}a\\
b
\end{bmatrix}:=\begin{cases}
\binom{a}{b}, & \textrm{if }a,b\in\mathbb{Z}_{\ge0};\\
0, & \textrm{otherwise}.
\end{cases}
\]
\end{singlespace}

and

\[
\begin{bmatrix}a\\
b_{1},b_{2},\ldots,b_{k}
\end{bmatrix}:=\begin{cases}
\binom{a}{b_{1},b_{2},\ldots,b_{k}}, & \textrm{if }a,b_{i}\in\mathbb{Z}_{\ge0}\textrm{ and }\sum_{i}b_{i}=a;\\
0, & \textrm{otherwise}.
\end{cases}
\]
\end{defn}
\begin{thm}
\begin{singlespace}
\label{thm:The-Hasse-Witt-matrix}The Hasse-Witt matrix of a superelliptic
curve is given by $\left[a_{\left(i,j\right),\left(i',j'\right)}\right]$,
where 
\begin{equation}
a_{\left(i,j\right),\left(i',j'\right)}=\;{\displaystyle \sum_{\mathclap{\begin{array}{c}
{\scriptstyle k_{0}+k_{1}+\ldots+k_{c}=v_{j}=(j-a+p(a-j'))/a,}\\
{\scriptstyle pi'=bv_{j}+ck_{c}+(c-1)k_{c-1}+\ldots+k_{1}+i}
\end{array}}}}\qquad\qquad\begin{bmatrix}(j-a+p(a-j'))/a\\
k_{0},\ldots,k_{c}
\end{bmatrix}m_{0}^{k_{0}}m_{1}^{k_{1}}\ldots m_{c}^{k_{c}}.\label{eq:hasse-witt}
\end{equation}
\end{singlespace}
\end{thm}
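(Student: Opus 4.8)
The plan is to compute $\mathscr{C}(\omega_{i,j})$ explicitly and read off its coordinates against the basis $\{\omega_{i',j'} : (i',j') \in \mathcal{N}\}$. Writing $\omega_{i,j} = x^{i-1}y^{j-a}\,dx$, the first task is to isolate a perfect $p$-th power so that both parts of Lemma~\ref{lem:cartier lem} apply cleanly. Lemma~\ref{lem:1} provides integers $u_j, v_j$ with $j-1+(a-1)(p-1) = p(u_j-1)+av_j$; subtracting $(a-1)p$ rearranges this to $j-a = p(u_j-a)+av_j$. Using the curve equation $y^a = x^b f(x)$, this yields
\[
\omega_{i,j} = \big(y^{u_j-a}\big)^{p}\,x^{i-1}\big(x^{b}f(x)\big)^{v_j}\,dx,
\]
where $y^{u_j-a}$ is a legitimate function-field element because $u_j \le a$. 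The first property in Lemma~\ref{lem:cartier lem} then gives $\mathscr{C}(\omega_{i,j}) = y^{u_j-a}\,\mathscr{C}\big(x^{i-1}(x^b f(x))^{v_j}\,dx\big)$.

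Next I would expand $(x^b f(x))^{v_j} = x^{bv_j}(m_c x^c + \ldots + m_0)^{v_j}$ by the multinomial theorem, so that $x^{i-1}(x^b f(x))^{v_j}$ becomes a sum of monomials $x^{M}$ with weight $\binom{v_j}{k_0,\ldots,k_c}m_0^{k_0}\cdots m_c^{k_c}$ and $M = i-1+bv_j+ck_c+(c-1)k_{c-1}+\ldots+k_1$. Applying the second property in Lemma~\ref{lem:cartier lem} term by term, a monomial $x^{M}\,dx$ survives exactly when $p \mid M+1$, i.e. when $pi' = bv_j+ck_c+\ldots+k_1+i$ for $i' = s_{i,j}(k_0,\ldots,k_c)$, and in that case $\mathscr{C}(x^M\,dx) = x^{i'-1}\,dx$. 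Multiplying the survivors back by $y^{u_j-a}$ turns each into $x^{i'-1}y^{u_j-1}/y^{a-1}\,dx = \omega_{i',u_j}$, and grouping terms sharing a common $i'$ produces precisely the coefficient claimed in \eqref{eq:hasse-witt}; the substitution $v_j = (j-a+p(a-j'))/a$ is just the rearranged Lemma~\ref{lem:1} with $j' = u_j$, so the second index of every output differential is forced to be $u_j$.

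The genuinely nontrivial step, and the main obstacle, is verifying that every surviving term $\omega_{i',u_j}$ is actually a chosen basis differential, that is, $(i',u_j) \in \mathcal{N}$; only then can the coefficients be read off without any further rewriting in $\Omega(C)$. I would first dispatch the first coordinate: $i' = s_{i,j} \ge 1$ is immediate since the defining numerator is positive, and $i' \le b+c-1$ follows from $ck_c+\ldots+k_1 \le cv_j$ together with $v_j \le p-1$. The second-coordinate inequalities are then supplied exactly by Lemmas~\ref{lem:2}--\ref{lem:4}: Lemma~\ref{lem:2} gives the upper bound $u_j \le \lceil -\tfrac{a}{b+c}i'+a\rceil - 1$ required in both regimes of $\mathcal{N}$, Lemma~\ref{lem:3} gives the lower bound $\lfloor -\tfrac{a}{b}i'+a\rfloor+1 \le u_j$ in the regime $1 \le i' \le b$, and Lemma~\ref{lem:4} gives $u_j \ge 1$ in the regime $b+1 \le i' \le b+c-1$ (its argument in fact shows $u_j \ge 1$ whenever $v_j \le p-1$). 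The care here lies in matching each range of $i'$ to the correct pair of inequalities and checking that the two regimes exhaust all values $1\le i'\le b+c-1$; once the membership $(i',u_j)\in\mathcal{N}$ is confirmed for every nonzero term, the expression \eqref{eq:hasse-witt} follows directly as the coefficient of $\omega_{i',j'}$.
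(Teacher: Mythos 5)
Your proposal is correct and takes essentially the same route as the paper's own proof: both extract the $p$-th power via Lemma \ref{lem:1} (your identity $j-a=p(u_{j}-a)+av_{j}$ is just the paper's $j-1+(a-1)(p-1)=p(u_{j}-1)+av_{j}$ rearranged), expand $\left(x^{b}f(x)\right)^{v_{j}}$ by the multinomial theorem, apply the two properties of $\mathscr{C}$ from Lemma \ref{lem:cartier lem}, and invoke Lemmas \ref{lem:2}, \ref{lem:3}, and \ref{lem:4} to place $(i',u_{j})$ in $\mathcal{N}$. Your explicit verification that $1\le i'\le b+c-1$ is a detail the paper leaves implicit, but otherwise the arguments coincide.
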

\begin{proof}
\begin{singlespace}
Using Lemma \ref{lem:1} and $y^{a}=x^{b}f(x)$, we find that 
\begin{alignat*}{1}
\omega_{i,j} & =\dfrac{x^{i-1}y^{j-1+(a-1)(p-1)}}{y^{p(a-1)}}\\
 & =\dfrac{x^{i-1}y^{p(u_{j}-1)+av_{j}}}{y^{p(a-1)}}\\
 & =\left(\dfrac{y^{u_{j}-1}}{y^{a-1}}\right)^{p}\sum_{k_{0}+k_{1}+\ldots+k_{c}=v_{j}}\begin{pmatrix}v_{j}\\
k_{0},\ldots,k_{c}
\end{pmatrix}m_{0}^{k_{0}}m_{1}^{k_{1}}\ldots m_{c}^{k_{c}}x^{bv_{j}+ck_{c}+(c-1)k_{c-1}+\ldots+k_{1}+i-1}.
\end{alignat*}
Then, using Lemma \ref{lem:cartier lem}, 
\begin{alignat*}{1}
\mathscr{C}(\omega_{i,j}) & =\sum_{\mathclap{\begin{array}{c}
{\scriptstyle k_{0}+k_{1}+\ldots+k_{c}=v_{j},}\\
{\scriptstyle p|bv_{j}+ck_{c}+(c-1)k_{c-1}+\ldots+k_{1}+i}
\end{array}}}\qquad\begin{pmatrix}v_{j}\\
k_{0},\ldots,k_{c}
\end{pmatrix}m_{0}^{k_{0}}m_{1}^{k_{1}}\ldots m_{c}^{k_{c}}\omega_{s_{i,j}(k_{0},k_{1},\ldots,k_{c}),u_{j}}.
\end{alignat*}
Now, Lemmas \ref{lem:2}, \ref{lem:3}, and \ref{lem:4} imply that
$(s_{i,j}(k_{0},\ldots,k_{c}),u_{j})\in\mathcal{N}$, so $\omega_{s_{i,j}(k_{0},\ldots,k_{c}),u_{j}}$
is a basis element. The result follows.
\end{singlespace}
\end{proof}
Using (\ref{eq:C}), we deduce the following.
\begin{thm}
\begin{singlespace}
\label{thm:The-number-of}The number of points on the smooth projective
model of a superelliptic curve defined by $y^{a}=x^{b}\left(m_{c}x^{c}+m_{c-1}x^{c-1}+\ldots+m_{0}\right)$
over $\mathbb{F}_{p}$ is 
\[
\#C(\mathbb{F}_{p})\equiv1-{\displaystyle \sum_{(i,j)\in N}}\left(\;\;\qquad\qquad{\displaystyle \sum_{\mathclap{\begin{array}{c}
{\scriptstyle k_{0}+k_{1}+\ldots+k_{c}=v_{j}=(p-1)(a-j)/a,}\\
{\scriptstyle p|bv_{j}+ck_{c}+(c-1)k_{c-1}+\ldots+k_{1}+i}
\end{array}}}}\qquad\qquad\begin{bmatrix}(p-1)(a-j)/a\\
k_{0},\ldots,k_{c}
\end{bmatrix}m_{0}^{k_{0}}m_{1}^{k_{1}}\ldots m_{c}^{k_{c}}\right)\pmod*{p},
\]
where $p$ is a prime greater than $a$ and $b+c.$
\end{singlespace}
\end{thm}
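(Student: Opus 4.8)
The plan is to obtain $\#C(\mathbb{F}_{p})$ modulo $p$ from the trace of the Hasse-Witt matrix. By (\ref{eq:C}) we have $\#C(\mathbb{F}_{p})\equiv1-\textrm{tr}(A_{p}(C))\pmod*{p}$, and the trace of a matrix is the sum of its diagonal entries, so $\textrm{tr}(A_{p}(C))=\sum_{(i,j)\in\mathcal{N}}a_{(i,j),(i,j)}$. It therefore suffices to specialize the formula (\ref{eq:hasse-witt}) of Theorem \ref{thm:The-Hasse-Witt-matrix} to the diagonal, i.e. to the case $(i',j')=(i,j)$, and to sum the result over $\mathcal{N}$.

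First I would carry out this specialization. Setting $j'=j$, the top argument of the multinomial coefficient becomes
\[
v_{j}=\frac{j-a+p(a-j)}{a}=\frac{(p-1)(a-j)}{a},
\]
which is exactly the quantity in the statement. The index constraint $pi'=bv_{j}+ck_{c}+(c-1)k_{c-1}+\ldots+k_{1}+i$ of (\ref{eq:hasse-witt}) becomes $pi=bv_{j}+ck_{c}+\ldots+k_{1}+i$, which in particular forces $p\mid bv_{j}+ck_{c}+\ldots+k_{1}+i$, the divisibility condition appearing in the displayed sum. Feeding these specializations into (\ref{eq:hasse-witt}), summing over $(i,j)\in\mathcal{N}$, and invoking $\#C(\mathbb{F}_{p})\equiv1-\textrm{tr}(A_{p}(C))\pmod*{p}$ then yields a double sum of the shape claimed.

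The step that demands the most care is matching the index condition precisely: the diagonal entry is governed by the equality $pi=bv_{j}+ck_{c}+\ldots+k_{1}+i$, whereas the statement records only the divisibility $p\mid bv_{j}+ck_{c}+\ldots+k_{1}+i$. Unwinding the proof of Theorem \ref{thm:The-Hasse-Witt-matrix}, the diagonal entry $a_{(i,j),(i,j)}$ collects exactly those tuples $(k_{0},\ldots,k_{c})$ whose Cartier image $\omega_{s_{i,j}(k_{0},\ldots,k_{c}),u_{j}}$ equals $\omega_{i,j}$, namely those with $u_{j}=j$ and $s_{i,j}(k_{0},\ldots,k_{c})=i$. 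I would therefore confirm that the tuples singled out by the displayed divisibility condition are precisely these, controlling the admissible range of $s_{i,j}(k_{0},\ldots,k_{c})$ through Lemmas \ref{lem:2}, \ref{lem:3}, and \ref{lem:4} together with the inequalities defining $\mathcal{N}$. Finally I would dispose of the values of $j$ with $u_{j}\ne j$: by Lemma \ref{lem:1} the (integer) value $v_{j}$ coincides with $(p-1)(a-j)/a$ only when the latter is a nonnegative integer, so for the remaining $j$ every coefficient $\begin{bmatrix}(p-1)(a-j)/a\\ k_{0},\ldots,k_{c}\end{bmatrix}$ in the inner sum vanishes by the convention defining the bracket symbol. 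Combining these observations gives Theorem \ref{thm:The-number-of}.
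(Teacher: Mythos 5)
Your strategy coincides with the paper's: the paper proves this theorem with the single line ``Using (\ref{eq:C}), we deduce the following,'' i.e.\ exactly your reduction to the trace of the Hasse-Witt matrix of Theorem \ref{thm:The-Hasse-Witt-matrix}, and your disposal of the indices $j$ with $u_{j}\ne j$ (the bracket vanishes because $(p-1)(a-j)/a$ is then not a nonnegative integer) is correct. The gap is precisely at the step you flag as delicate, and it cannot be repaired as you propose: it is not true that the tuples satisfying the divisibility $p\mid bv_{j}+ck_{c}+\cdots+k_{1}+i$ are exactly those with $s_{i,j}(k_{0},\ldots,k_{c})=i$. Lemmas \ref{lem:2}, \ref{lem:3}, and \ref{lem:4} only show that $\left(s_{i,j}(k_{0},\ldots,k_{c}),u_{j}\right)$ lies in $\mathcal{N}$; they do not force $s_{i,j}=i$. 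A tuple with $s_{i,j}\ne i$ still satisfies the divisibility but contributes to the off-diagonal entry $a_{(i,j),(s_{i,j},j)}$, so the divisibility-indexed double sum computes the sum of \emph{all} entries $a_{(i,j),(i',j)}$ of the blocks with $u_{j}=j$, not just the diagonal ones.

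Concretely, take $C:y^{2}=f(x)=x^{5}+x^{4}+2$ over $\mathbb{F}_{7}$ (so $a=2$, $b=0$, $c=5$, and $f$ is square-free mod $7$), with $\mathcal{N}=\{(1,1),(2,1)\}$, $u_{1}=1$, $v_{1}=3$, and $a_{(i,1),(i',1)}=c_{7i'-i}$, where $c_{k}$ denotes the coefficient of $x^{k}$ in $f^{3}$. For $(i,j)=(1,1)$ the divisibility $7\mid5k_{5}+\cdots+k_{1}+1$ is met both by tuples of weighted sum $6$ (giving $s_{1,1}=1$, the diagonal entry $c_{6}=0$) and by tuples of weighted sum $13$ (giving $s_{1,1}=2$, the off-diagonal entry $c_{13}=3$); likewise $(2,1)$ picks up $c_{5}=5$ in addition to its diagonal entry $c_{12}=1$. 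The displayed formula therefore yields $1-(0+3+5+1)\equiv6\pmod{7}$, while the curve actually has $14$ points ($13$ affine plus one at infinity), consistent with the trace: $1-(c_{6}+c_{12})\equiv0\pmod{7}$. So what your analysis has really uncovered is that the statement as printed is misstated: the side condition must be the equality $pi=bv_{j}+ck_{c}+\cdots+k_{1}+i$ (equivalently $s_{i,j}(k_{0},\ldots,k_{c})=i$), which is what the trace demands and what the paper's own Corollary silently uses when it solves this equality to get $k_{c}=(p-1)(ai+bj-ab)/(ac)$. With the equality in place of the divisibility your argument is complete and is exactly the paper's (unwritten) proof; with the divisibility as printed, no argument can close the gap.
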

\begin{cor}
The number of points on the curve $C:y^{a}=m_{c}x^{b+c}+m_{0}x^{b}$
is 
\begin{equation}
\#C(\mathbb{F}_{p})\equiv1-{\displaystyle \sum_{(i,j)\in N}\begin{bmatrix}(p-1)(a-j)/a\\
(p-1)(ai+bj-ab)/(ac)
\end{bmatrix}}m_{0}^{k_{0}}m_{c}^{k_{c}}\pmod*{p},\label{eq:number of points}
\end{equation}
where $k_{0}=\frac{(p-1)\left((a-j)(b+c)-ai\right)}{ac}$ and $k_{c}=\frac{(p-1)(ai+bj-ab)}{ac}.$
\end{cor}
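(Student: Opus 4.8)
The plan is to derive the Corollary as the specialization of Theorem~\ref{thm:The-number-of} to the trinomial $f(x)=m_cx^{c}+m_0$, that is, to the case $m_1=m_2=\cdots=m_{c-1}=0$. First I would substitute these vanishing coefficients into the general point-count formula. In each summand the monomial $m_0^{k_0}m_1^{k_1}\cdots m_c^{k_c}$ acquires a factor $m_\ell^{k_\ell}=0$ whenever some middle index $k_\ell$ with $1\le\ell\le c-1$ is positive, so the only partitions that survive are those with $k_1=\cdots=k_{c-1}=0$. For such a partition $k_0+k_c=v_j$, and the multinomial coefficient $\binom{v_j}{k_0,0,\ldots,0,k_c}$ reduces to the ordinary binomial coefficient $\binom{v_j}{k_c}$.

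With the middle indices eliminated, the constraints inherited from Theorem~\ref{thm:The-number-of} are just $k_0+k_c=v_j=(p-1)(a-j)/a$ together with the Frobenius condition, which—since it records a diagonal (trace) contribution, i.e. $s_{i,j}(k_0,\ldots,k_c)=i$—becomes $bv_j+ck_c+i=pi$. The central observation is that this is a pair of linear equations in the two unknowns $k_0,k_c$ and therefore has a unique solution for each $(i,j)\in\mathcal N$: the second equation fixes $k_c$, and the first then fixes $k_0=v_j-k_c$. This is precisely why the inner summation of Theorem~\ref{thm:The-number-of} collapses and the Corollary carries only the outer sum over $(i,j)$.

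It then remains to perform the (routine) algebra. Solving $bv_j+ck_c+i=pi$ gives $k_c=\frac{(p-1)i-bv_j}{c}$; inserting $v_j=(p-1)(a-j)/a$ and clearing denominators yields $k_c=\frac{(p-1)(ai+bj-ab)}{ac}$, and subtracting from $v_j$ simplifies $k_0=v_j-k_c$ to $\frac{(p-1)((a-j)(b+c)-ai)}{ac}$, matching the stated values. Recording $k_c$ in the lower slot of the surviving binomial $\binom{(p-1)(a-j)/a}{k_c}$ then reproduces the claimed congruence for $\#C(\mathbb{F}_p)$.

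I do not expect a genuine obstacle here, only bookkeeping. The one point I would verify carefully is that the collapse to a single term is legitimate: since $p>b+c\ge c$ we have $\gcd(c,p)=1$, so $ck_c\equiv-(bv_j+i)\pmod{p}$ pins $k_c$ uniquely modulo $p$, and the range $0\le k_c\le v_j\le p-1$ leaves at most one admissible integer. Whenever the resulting $k_0,k_c$ fail to be non-negative integers, the bracket convention $\begin{bmatrix}a\\b\end{bmatrix}=0$ silently discards that $(i,j)$, so the formula needs no auxiliary hypotheses or case distinctions.
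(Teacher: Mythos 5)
Your proposal is correct and follows the same route the paper intends: the Corollary appears with no separate proof precisely because it is the immediate specialization of Theorem \ref{thm:The-number-of} to $m_{1}=\cdots=m_{c-1}=0$, under which only terms with $k_{1}=\cdots=k_{c-1}=0$ survive, the two linear constraints (the partition condition and the diagonal condition $bv_{j}+ck_{c}+i=pi$) determine $(k_{0},k_{c})$ uniquely, and the multinomial collapses to the stated binomial with exactly the values of $k_{0}$ and $k_{c}$ you computed. Your closing verification that $\gcd(c,p)=1$ forces at most one admissible $k_{c}$ and that the bracket convention silently discards non-integral or negative solutions is precisely the bookkeeping the paper leaves implicit.
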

\begin{rem}
\label{A-rather-interesting}A rather interesting number-theoretic
application of Theorem \ref{thm:The-number-of} tells us the number
of solutions to the Diophantine equation $y^{a}=h(x)=r_{a}x^{a}+r_{a-1}x^{a-1}+\ldots+r_{1}x+r_{0}$
modulo $p$. Define $\textrm{ind}_{\zeta_{p}}(m)$ to be the smallest
positive integral power of some primitive root $\zeta_{p}$ of $\mathbb{F}_{p}$
such that $\zeta_{p}^{\textrm{ind}_{\zeta_{p}}(m)}\equiv m\pmod*{p}$.
Then, $y^{a}=h(x)$ has exactly $p+1-w\left(r_{a}\right)$ solutions
$(x,y)\in\mathbb{F}_{p}$ if at least one of $r_{a},r_{a-1}$ is nonzero
in $\mathbb{F}_{p}$, at least one of $r_{1},r_{0}$ is nonzero in
$\mathbb{F}_{p}$, $\gcd(a,p-1)=1$, $p>16g^{2}$, and $h(x)$ is
square-free, where $g$ is the genus of curve $C$ associated with
the Diophantine equation and $w\left(r_{a}\right)$, the number of
$a$th roots of $r_{a},$ is given by 
\[
w\left(r_{a}\right)=\begin{cases}
0, & \textrm{ if \ensuremath{r_{a}} is not a residue of degree \ensuremath{a} modulo }p;\\
\gcd\left(\textrm{ind}_{\zeta_{p}}(r_{a}),p\right), & \textrm{otherwise}.
\end{cases}
\]

It is worth noting that this can be extended to any Diophantine equation
of the form \\
$y^{a}=x^{b}\left(m_{c}x^{c}+m_{c-1}x^{c-1}+\ldots+m_{0}\right).$
However, the number of solutions to this equation is not the same
as $\#C(\mathbb{F}_{p})$: Theorem \ref{thm:The-number-of} gives
the number of points on the smooth projective model of the superelliptic
curve defined by this equation, which is usually different from the
number of points in the affine model. Thus, for the general case,
a tiny correction related to the singular points has to be made to
account for the discrepancy between $\#C(\mathbb{F}_{p})$ and the
number of solutions to the equation $y^{a}=x^{b}\left(m_{c}x^{c}+m_{c-1}x^{c-1}+\ldots+m_{0}\right)$. 
\end{rem}

\section{Fast Point-Counting Algorithms}

Given our work in Section 3, we have reduced the problem of computing
the Hasse-Witt (and hence $\#C\left(\mathbb{F}_{p}\right)$ and $\#J_{C}\left(\mathbb{F}_{p}\right)\bmod{p}$)
to computing certain multinomial coefficients modulo $p$.

\subsection{Computing $\#C\left(\mathbb{F}_{p}\right)$ for Specific Trinomial
Superelliptic Curves\label{subsec:specific trinomial}}

We consider a specific class of superelliptic curves, namely those
of the form $y^{a}=m_{c}x^{b+c}+m_{0}x^{b}$. To compute the number
of points on these specific trinomial superelliptic curves, we generalize
Theorem 3.5 (which applies to curves of the form $y^{2}=x^{8}+c$
and $y^{2}=x^{7}-cx$) in Fité and Sutherland's paper \cite{sutherland}
to apply to this much more general class of curves.
\begin{thm}
\label{thm:number of points} Given a curve $C:y^{a}=m_{c}x^{b+c}+m_{0}x^{b}$
and a prime $p>16g^{2},$ where $g$ is the genus of $C$ and $\gcd(ac,p-1)=e=3,4,6,\textrm{ or }8,$
there exists an algorithm that computes $\#C\left(\mathbb{F}_{p}\right)$ 
\end{thm}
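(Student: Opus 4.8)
The plan is to begin from the explicit congruence for $\#C(\mathbb{F}_p)$ already established: by the corollary leading to (\ref{eq:number of points}), for the curve $C:y^a=m_cx^{b+c}+m_0x^b$ we have
\[
\#C(\mathbb{F}_p)\equiv 1-\sum_{(i,j)\in N}\begin{bmatrix}(p-1)(a-j)/a\\ (p-1)(ai+bj-ab)/(ac)\end{bmatrix}m_0^{k_0}m_c^{k_c}\pmod*{p}.
\]
Since $N$ has $O(g)$ elements and each monomial $m_0^{k_0}m_c^{k_c}$ reduces modulo $p$ by fast exponentiation, the entire computation of $\#C(\mathbb{F}_p)\bmod p$ reduces to evaluating the $O(g)$ binomial coefficients modulo $p$. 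The obstacle is that the upper argument $(p-1)(a-j)/a$ is of size $\Theta(p)$, so a naive factorial or product evaluation costs $\Theta(p)$ operations; I must instead evaluate each coefficient modulo $p$ in (quasi-)polylogarithmic time.

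The first real step is to record that every binomial coefficient appearing has entries that are integer multiples of $(p-1)/e$, where $e=\gcd(ac,p-1)$; that is, each is of the shape $\binom{r(p-1)/e}{s(p-1)/e}\bmod p$ for small integers $r,s$ read off from $(i,j)$. Such coefficients are governed by the classical theory of cyclotomy: via Jacobi sums $J(\chi^{u},\chi^{v})$ for a multiplicative character $\chi$ of order $e$ (equivalently, through the Gross--Koblitz/Gauss--Jacobi formalism), one expresses $\binom{r(p-1)/e}{s(p-1)/e}\bmod p$ in terms of the prime of $\mathbb{Z}[\zeta_e]$ lying above $p$. The crux is the hypothesis $e\in\{3,4,6,8\}$: for precisely these $e$ the relevant cyclotomic (or imaginary quadratic) ring has class number one, so that prime is principal and is pinned down, up to a unit, by a single representation of $p$ (or of $4p$) by a principal binary quadratic form $x^2+dy^2=m$, where $d$ and $m$ depend only on $e$ (e.g. $p=x^2+y^2$ for $e=4$, $4p=L^2+27M^2$ for $e=3$, and the $e=6,8$ cases combine the quadratic data for $-1,-2,-3$). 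This is exactly the promised reduction of the Hasse-Witt entries to quadratic Diophantine equations, and it is the step I expect to be the main obstacle: one must verify, case by case for $e=3,4,6,8$, the precise congruences relating the binomial coefficients to the quadratic-form data, and use the class-number-one property to resolve the sign and unit ambiguities in the Jacobi sums consistently.

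The second step converts this into an algorithm. Producing the representation of $p$ by $x^2+dy^2=m$ is handled by Cornacchia's algorithm once a square root of $-d$ modulo $p$ (or modulo $m$) is known; computing that square root is the only randomized ingredient, running in $O(\textrm{M}(\log p)\log p)$ expected time by Tonelli--Shanks, with the deterministic variants (under GRH, or unconditionally) accounting for the other running times quoted in the introduction. From $(x,y)$ I recover each binomial coefficient modulo $p$ through the case congruences of the previous step and substitute back into the displayed formula, which costs only $O(g)$ further modular operations; this yields $\#C(\mathbb{F}_p)\bmod p$.

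The final step lifts this residue to the exact integer using the Hasse-Weil bounds (\ref{eq:hasse bound}): $\#C(\mathbb{F}_p)$ lies in the interval $[\,p+1-2g\sqrt p,\;p+1+2g\sqrt p\,]$ of length $4g\sqrt p$. The hypothesis $p>16g^2$ is equivalent to $4g\sqrt p<p$, so this interval contains at most one integer in each residue class modulo $p$; hence the residue computed above determines $\#C(\mathbb{F}_p)$ uniquely, and one division with remainder recovers it. Combining the three steps gives the claimed algorithm, with the dominant cost being the modular square root in the second step.
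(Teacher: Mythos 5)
Your proposal follows essentially the same route as the paper's proof: reduce via the corollary to binomial coefficients of the form $\binom{rf}{sf}$ with $f=(p-1)/e$, resolve these modulo $p$ through the Jacobi-sum/cyclotomy congruences (the paper cites Hudson and Williams and tabulates the cases $e=3,4,6,8$ explicitly), obtain the quadratic-form data $x^{2}+dy^{2}=m$ by a modular square root followed by Cornacchia's algorithm, and lift the residue with the Hasse--Weil bounds using $p>16g^{2}$. The only differences are presentational \textemdash{} you sketch the class-number-one rationale behind the congruences where the paper invokes them by citation, and you attribute the probabilistic square-root runtime to Tonelli--Shanks where the paper uses Cipolla--Lehmer \textemdash{} neither of which changes the argument.
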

\begin{itemize}
\item \textit{probabilistically in }$O(\textrm{M}(\log p)\log p)$ \textit{time;}
\item \textit{deterministically in }$O\left(\textrm{M}(\log p)\log^{2}p\log\log p\right)$
\textit{time, assuming GRH;}
\item \textit{deterministically in }$O\left(\textrm{M}(\log^{3}p)\frac{\log^{2}p}{\log\log p}\right)$
\textit{time.}
\end{itemize}
\textit{In addition, given a positive integer $N$ we can compute
$\#C\left(\mathbb{F}_{p}\right)$ for all primes $p$, such that $3<p\le N$,
deterministically in }$O\left(N\textrm{M}\left(\log N\right)\right)$
\textit{time.}
\begin{proof}
We give such an algorithm. It consists of three steps: 1) computing
{\scriptsize{}${\scriptstyle \begin{bmatrix}(p-1)(a-j)/a\\
(p-1)(ai+bj-ab)/(ac)
\end{bmatrix}}$} modulo $p$ for all $(i,j)\in\mathcal{N}$, 2) finding $\#C\left(\mathbb{F}_{p}\right)\bmod{p}$
using (\ref{eq:number of points}), and 3) determining $\#C\left(\mathbb{F}_{p}\right)$
using (\ref{eq:hasse bound}). Assuming $a$ and $c$ are fixed, the
second step runs in $O(\textrm{M}(\log p)\log p)$ time and the third
step runs in at most $O(\textrm{M}(\log p))$ time. In what follows,
we will describe and determine the time complexity of the first step.
We shall then see that, in similar fashion to the proof of Theorem
3.5 in Fité and Sutherland's paper \cite{sutherland}, the desired
runtimes follow from the time complexities of the first step.

Let $\gcd(ac,p-1)=e$ and $ac=ed,p-1=ef$, where $d,e,f\in\mathbb{N}$.
Note that $\gcd(d,f)=1$. Then, we have
\[
\begin{bmatrix}(p-1)(a-j)/a\\
(p-1)(ai+bj-ab)/(ac)
\end{bmatrix}=\begin{bmatrix}(p-1)(ac-jc)/(ac)\\
(p-1)(ai+bj-ab)/(ac)
\end{bmatrix}=\begin{bmatrix}(ed-jc)f/d\\
(ai+bj-ab)f/d
\end{bmatrix}.
\]
Using congruences for binomial coefficients of the form $\binom{rf}{sf}$
modulo primes of the form $p=ef+1$, we can compute $\#C\left(\mathbb{F}_{p}\right)$
extremely quickly. In particular, when $e=3,4,6,\textrm{ or }8$,
it is possible to compute $\binom{rf}{sf}$ modulo $p$ efficiently
for $1\le s<r\le e-1$. Note that, by definition,
\[
\begin{bmatrix}(ed-jc)f/d\\
(ai+bj-ab)f/d
\end{bmatrix}=\begin{cases}
\binom{(ed-jc)f/d}{(ai+bj-ab)f/d}, & \textrm{if }d|ed-jc\textrm{ and }d|ai+bj-ab;\\
0, & \textrm{otherwise}.
\end{cases}
\]
We can then compute this expression quickly using results from Hudson
and Williams' work \cite{hudson}. 

We now give explicit congruence relations for the aforementioned binomial
coefficients $\binom{rf}{sf}.$ We will also use similar notation
to that of Hudson and William \cite{hudson}: $p=a_{4}^{2}+b_{4}^{2}$
for $e=4$, $p=a_{3}^{2}+3b_{3}^{2}$ for $e=3,6$, $p=a_{8}^{2}+2b_{8}^{2}$
for $e=8$, $a_{4}\equiv1\pmod*{4}$, $a_{3}\equiv1\pmod*{3}$, and
$a_{8}\equiv1\pmod*{4}$. For each $e$, define ``representative
binomial coefficients'' as the binomial coefficients sufficient to
generate all of the other binomial coefficients through trivial congruences
\cite{hudson}. Table \ref{tab:The-entry-that-1} summarizes the congruence
relations for the representative binomial coefficients for $e=3,4,6,$
and 8 (let $\overline{b_{3}}\in\mathbb{Z}$ be $b_{3}\bmod{3}$).
\begin{table}[h]
\begin{centering}
\begin{tabular}{|c|c|c|c|c|}
\hline 
$\binom{af}{bf}$ & $e=3$ & $4$ & $6$ & $8$\tabularnewline
\hline 
\hline 
$\binom{2f}{f}$ & $\begin{cases}
2a_{3} & \textrm{if }\overline{b_{3}}=0\\
-a_{3}-3b_{3} & \textrm{if }\overline{b_{3}}=1\\
-a_{3}+3b_{3} & \textrm{if }\overline{b_{3}}=2
\end{cases}$ & $2a_{4}$ & $\begin{cases}
2\left(-1\right)^{f}a_{3} & \textrm{if }\overline{b_{3}}=0\\
\left(-1\right)^{f}\left(-a_{3}+3b_{3}\right)b_{3} & \textrm{if }\overline{b_{3}}=1\\
\left(-1\right)^{f}\left(-a_{3}-3b_{3}\right)b_{3} & \textrm{if }\overline{b_{3}}=2
\end{cases}$ & $\left(-1\right)^{b_{4}/4}2a_{8}$\tabularnewline
\hline 
$\binom{3f}{f}$ & - & - & $2a_{3}$ & $\left(-1\right)^{f+b_{4}/4}2a_{4}$\tabularnewline
\hline 
$\binom{4f}{f}$ & - & - & - & $\left(-1\right)^{f}2a_{8}$\tabularnewline
\hline 
$\binom{5f}{2f}$ & - & - & - & $\left(-1\right)^{f+b_{4}/4}2a_{8}$\tabularnewline
\hline 
\end{tabular}
\par\end{centering}
\caption{The entry that corresponds to the row with representative binomial
coefficient $\binom{af}{bf}$ and column $e$ is congruent to the
$\binom{af}{bf}\pmod*{ef+1}$ . For example, we have $\binom{3f}{f}\equiv2a_{3}\pmod*{6f+1}$.\label{tab:The-entry-that-1}}
\end{table}

We have effectively reduced the problem of computing binomial coefficients
modulo $p$ to a problem of solving Diophantine equations of the form
$x^{2}+dy^{2}=m$. We can then employ Cornacchia's algorithm \cite{hudson}
to find $a_{3},a_{4},a_{8},b_{3},b_{4},$ and $b_{8}$. However, before
doing so, we must find a square root of $-d$ modulo $m$. 

There are three well-known methods of efficiently computing a square
root modulo $p$: the Cipolla-Lehmer, Tonelli-Shanks, and Schoof algorithms
\cite{sutherland}. Define $\textrm{M}(n)$ to be the time needed
to compute the product of two $n$-digit numbers. For practical applications,
we can take $\textrm{M}(n)$ to be $O\left(n\log n\log\log n\right)$
or $O\left(n\left(\log n\right)2^{\log^{*}n}\right)$ \cite{fast mult}.
The probabilistic Cipolla-Lehmer approach involves factorization of
a quadratic over $\mathbb{F}_{p}[x]$ and takes $O(\textrm{M}(\log p)\log p)$
time. The deterministic Tonelli-Shanks approach requires computing
a generator for the maximal 2-subgroup of $\mathbb{F}_{p}^{*}$ and
takes $O\left(\textrm{M}(\log p)\log^{2}p\log\log p\right)$ time,
assuming the generalized Riemann hypothesis (GRH). The Schoof approach
takes advantage of properties of certain elliptic curves and requires
$O\left(\textrm{M}(\log^{3}p)\frac{\log^{2}p}{\log\log p}\right)$
time. Fité and Sutherland discuss these algorithms in much greater
detail in their study of computing $\#C\left(\mathbb{F}_{p}\right)$
for certain hyperelliptic curves \cite{sutherland}. 

We can now use Cornacchia's algorithm: given a Diophantine equation
of the form $x^{2}+dy^{2}=m$ and a square root of $-d$ modulo $m$,
such that $\gcd(d,m)=1$, we can compute a solution $(x,y)$ (if one
exists). It turns out that the time complexity of Cornacchia's algorithm
is essentially the same as that of the Euclidean algorithm \cite{sutherland}.

To compute $a_{3},a_{4},a_{8},b_{3},b_{4},$ and $b_{8}$, we can
first use Cornacchia's algorithm to find a solution $(x,y)$ to $x^{2}+dy^{2}=p$,
where $d$ is $1,2,$ or 3. Taking the equation $x^{2}+y^{2}=p$ modulo
4, it is clear that at least one of $x$ and $y$ is odd. Without
loss of generality, say $x$ is odd. We can replace $x$ with $-x$,
so we can assume $x\equiv1\pmod{4}$ and let $a_{4}=x$. Likewise,
if we take $x^{2}+3y^{2}=p$ modulo 3, we can set $a_{3}\equiv1\pmod{3}$,
since $x$ is either 1 or $-1$ modulo 3 and if we take $x^{2}+2y^{2}=p$
modulo 2, $x$ must be odd, so we can set $a_{8}\equiv1\pmod{4}$
by letting $a_{8}$ equal $x$ or $-x$ modulo 4. 
\end{proof}
\begin{rem}
It is worth noting that Pila's generalization of Schoof's algorithm
can compute \textit{$\#C\left(\mathbb{F}_{p}\right)$} and the zeta
function in time polynomial in $\log p$; however, the time complexity
is a large power of $\log p$ that grows quickly with the genus. Even
in genus 2, the most efficient algorithms using Pila's approach take
$O\left(\log^{6+o(1)}p\right)$ time \cite{Pila}, but the algorithm
we have just described performs much better.
\end{rem}
\begin{example}
\label{Say--(99-bit)} Say $p=564819669946735512444543556507$ (99-bit)
and $C:y^{4}=x^{11}+x^{8}$ (note that $\gcd(564819669946735512444543556506,12)=6$.
The algorithm given in Theorem \ref{thm:number of points} computed
\[
\#C\left(\mathbb{F}_{p}\right)=564819669946736263601275822996
\]
in 66.2 ms using SageMath Version 7.3 on an Intel Celeron 2955U processor
at 1.4 GHz. In comparison, a brute-force algorithm to compute the
number of points for the same curve with $p=10133$ (14-bit) instead
took over six hours to run. 
\end{example}
It should be noted that this algorithm can be extended to find $\#J_{C}\left(\mathbb{F}_{p}\right)$
for genus two and three curves of the same form, with the additional
condition that $a|p-1$. It turns out that Hasse-Witt matrix $A_{p}\left(C\right)$
of such a curve is diagonal; it is not hard to see that $s_{i,j}\left(k_{0},\ldots,k_{c}\right)$
can only be $i$ and $u_{j}$ can only be $j$. If $A_{p}\left(C\right)$
is diagonal, $A_{p}\left(C\right)-I$ is also diagonal, so (\ref{eq:J})
tells us that 
\begin{equation}
\#J_{C}\left(\mathbb{F}_{p}\right)\equiv(-1)^{g}\prod_{(i,j)\in\mathcal{N}}\left(\begin{bmatrix}(p-1)(a-j)/a\\
(p-1)(ai+bj-ab)/(ac)
\end{bmatrix}m_{0}^{k_{0}}m_{c}^{k_{c}}-1\right)\pmod*{p},\label{eq:J mod p}
\end{equation}
where $k_{0}=\frac{(p-1)\left((a-j)(b+c)-ai\right)}{ac}$ and $k_{c}=\frac{(p-1)(ai+bj-ab)}{ac}$.

For genus two curves, the Hasse-Weil bounds and $\#J_{C}\left(\mathbb{F}_{p}\right)$
modulo $p$ gives us a maximum of five candidates for $\#J_{C}\left(\mathbb{F}_{p}\right)$,
so we can compute $\#J_{C}\left(\mathbb{F}_{p}\right)$ in the same
time as that of $\#C\left(\mathbb{F}_{p}\right)$, because arithmetic
in the Jacobian has a comparatively lower time complexity \cite{japanese,poonen2}.
For genus three curves, the Hasse-Weil bounds and $\#J_{C}\left(\mathbb{F}_{p}\right)$
modulo $p$ gives us a maximum of $\left\lceil 40\sqrt{p}\right\rceil $
candidates for $\#J_{C}\left(\mathbb{F}_{p}\right)$, so we can compute
$\#J_{C}\left(\mathbb{F}_{p}\right)$ in $O\left(\sqrt{p}\right)$
time \cite{poonen2,picard genus 3}. In comparison, Harvey has written
down an improvement of Kedlaya's point-counting algorithm to compute
$\#J_{C}\left(\mathbb{F}_{p}\right)$ for hyperelliptic curves in
$O\left(p^{1/2+o(1)}\right)$ time \cite{Harvey Kedlaya}. Our algorithm,
however, is not efficient for curves of higher genus.

\subsection{Computing $\#C\left(\mathbb{F}_{p}\right)$ and $A_{p}\left(C\right)$
for General Superelliptic Curves\label{subsec:Simplifying-the-Hasse-Witt}}

We describe an algorithm to simplify the entries of the Hasse-Witt
matrix modulo $p$ for a general superelliptic curve and compute $\#C\left(\mathbb{F}_{p}\right)$
and $A_{p}\left(C\right)$ efficiently. More generally, Harvey has
shown how to compute the zeta function of any algebraic variety over
$\mathbb{F}_{p}$ (including superelliptic curves) in $O(\sqrt{p}\log^{2+\epsilon}p)$
time \cite{Harvey general curve}, using Bostan et al.'s algorithm
\cite{Boston}. To do so, Harvey describes a new trace formula and
a deformation recurrence; on the other hand, we find the zeta function
modulo $p$ by reducing the Hasse-Witt matrix modulo $p$.

Throughout this section, we will assume that $b=0$, i.e. the equation
of superelliptic curve is $y^{a}=f(x)=m_{c}x^{c}+\ldots+m_{0}$, where
$m_{0}\ne0$ (note that we can perform a simple coordinate change
to (\ref{eq:superelliptic})). We summarize the algorithm as follows.
\begin{thm}
\label{thm:For-a-general} Given a curve $C:y^{a}=m_{c}x^{c}+\ldots+m_{0}$
and a prime $p>16g^{2},$ where $g$ is the genus of $C,$ there exists
an algorithm that computes $\#C\left(\mathbb{F}_{p}\right)$ in \textup{$O\left(\textrm{M}\left(\sqrt{p}\log p\right)\right)$}
time, assuming fixed $a$ and $c$. 
\end{thm}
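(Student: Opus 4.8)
The goal is to compute $\#C(\mathbb{F}_p)$ for a general superelliptic curve $y^a = m_c x^c + \ldots + m_0$ in time $O(\mathrm{M}(\sqrt{p}\log p))$. By Theorem \ref{thm:The-number-of} and equation (\ref{eq:C}), this reduces to computing the trace of the Hasse-Witt matrix $A_p(C)$ modulo $p$, which in turn requires computing the relevant diagonal entries $a_{(i,j),(i,j)}$ from (\ref{eq:hasse-witt}). The plan is to extract these entries as specific coefficients of a power of the polynomial $f(x)$, and then compute those coefficients efficiently using the method of Bostan, Gaudry, and Schost \cite{Boston}, which computes a single selected coefficient of $f(x)^N$ modulo $p$ in time proportional to $\sqrt{N}$ rather than $N$.

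The first step is to reformulate the multinomial sum in (\ref{eq:hasse-witt}) in generating-function language. With $b=0$, for a diagonal entry we have $v_j = (p-1)(a-j)/a$ and we need the coefficient of $x^{pi - i} = x^{(p-1)i}$ in $f(x)^{v_j}$, since the multinomial sum $\sum \binom{v_j}{k_0,\ldots,k_c} m_0^{k_0}\cdots m_c^{k_c} x^{ck_c + \ldots + k_1}$ is exactly the expansion of $f(x)^{v_j}$. Thus each diagonal entry of $A_p(C)$ is a single designated coefficient of $f(x)^{v_j}$ modulo $p$, where $v_j$ is of order $p$. First I would make this identification precise for every $(i,j)\in\mathcal{N}$ on the diagonal, checking that the index $pi' = v_j \cdot 0 + \ldots$ specializing to $i'=i$ picks out the correct power of $x$.

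The second step is the algorithmic heart: apply the Bostan et al. algorithm \cite{Boston} to compute each required coefficient of $f(x)^{v_j} \bmod p$ in $O(\mathrm{M}(\sqrt{p}\log p))$ time. The key point is that their baby-step/giant-step technique expresses the sought coefficient via a product of roughly $\sqrt{v_j} \approx \sqrt{p}$ matrices (or an equivalent matrix-factorial / linear-recurrence evaluation), each of bounded size once $a$ and $c$ are fixed, so the dominant cost is the $\sqrt{p}$ evaluations at a cost of $\mathrm{M}(\sqrt{p}\log p)$ each. Since $|\mathcal{N}| = O(1)$ when $a$ and $c$ are fixed (the genus is bounded), summing over the $O(1)$ diagonal entries does not change the asymptotic runtime. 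I would then combine these entries via (\ref{eq:C}) to get $\#C(\mathbb{F}_p) \bmod p$, and finally use the Hasse-Weil bounds (\ref{eq:hasse bound}) together with $p > 16g^2$ to pin down $\#C(\mathbb{F}_p)$ exactly, as in the trinomial case.

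The main obstacle is adapting the Bostan et al. framework, which was originally written for hyperelliptic curves ($a=2$) and the associated single power series, to the superelliptic setting where $f$ has degree $c$ and the exponent $v_j = (p-1)(a-j)/a$ varies with $j$. The linear recurrence satisfied by the coefficients of $f(x)^{v_j}$ comes from the differential equation $f \cdot (f^{v_j})' = v_j f' \cdot f^{v_j}$, which yields a recurrence of order $c$ with polynomial coefficients in the index; I expect the careful part to be verifying that this recurrence has the right shape for the matrix-product evaluation and that its coefficients remain invertible modulo $p$ over the range of indices needed (this is where the hypothesis $p > a, b+c$ and $p > 16g^2$ should be invoked). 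Once the recurrence is set up, the runtime analysis follows the same shape as the original argument of \cite{Boston}.
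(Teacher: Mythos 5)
Your proposal follows essentially the same route as the paper's proof: with $b=0$ you identify the Hasse-Witt entries as coefficients $f_{pi'-i,v_j}$ of $f^{v_j}$, derive the order-$c$ linear recurrence from $f\left(f^{v_j}\right)'-v_jf'f^{v_j}=0$, evaluate it with Bostan et al.'s baby-step/giant-step matrix-product algorithm in $O\left(\textrm{M}\left(\sqrt{p}\log p\right)\right)$ time, and finish via the trace and the Hasse-Weil bounds. The rationality/invertibility issue you flag as the careful part is handled in the paper exactly as you anticipate, by clearing denominators (setting $V_k=m_0^k\,k!\,U_k$ and $B(k)=m_0kA(k)$ so that the matrix entries become polynomials in $\mathbb{F}_p[k]$).
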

\begin{proof}
We give such an algorithm. It consists of three steps: 1) adapting
Bostan et al.'s algorithm \cite{Boston} to simplify the entries of
the Hasse-Witt matrix, 2) adding up the diagonal entries of the Hasse-Witt
matrix to find $\#C\left(\mathbb{F}_{p}\right)\bmod{p}$, and 3) determining
$\#C\left(\mathbb{F}_{p}\right)$ using (\ref{eq:hasse bound}). The
second step runs in $O(\textrm{M}(\log p)\log p)$ time and the third
step runs in at most $O(\textrm{M}(\log p))$ time. In what follows,
we will describe the first step and determine its time complexity.

In Section \ref{sec:Computing-Hasse-Witt-Matrices}, we found an explicit
formula for the Hasse-Witt matrix of any superelliptic curve. Specifically,
(\ref{eq:hasse-witt}) tells us that the problem of simplifying the
Hasse-Witt matrix is essentially a question of simplifying certain
multinomial coefficients modulo $p$. We can reinterpret this as a
problem of computing coefficients of certain powers of $x$ in $f^{v_{j}},$
where $v_{j}$ is defined in Lemma \ref{lem:2}. For convenience,
let $f_{i,j}$ denote the coefficient of $x^{i}$ in $f^{j}$.

Setting $b=0$ in (\ref{eq:hasse-witt}) and using the multinomial
theorem (note that $pi'-i=bv_{i}+ck_{c}+(c-1)k_{c-1}+\ldots+k_{1}$)
imply that 
\[
a_{\left(i,j\right),\left(i',j'\right)}=f_{pi'-i,v_{j}},
\]
where $a_{\left(i,j\right),\left(i',j'\right)}$ is the coefficient
of $\omega_{\left(i',j'\right)}$ when $\omega_{\left(i,j\right)}$
is written as a linear combination of the basis of regular differentials.

We can now use Bostan et al.'s algorithm \cite{Boston} for quickly
computing coefficients of large powers of a polynomial. In their paper,
they apply their algorithm to point-counting for hyperelliptic curves.
We generalize their method to superelliptic curves; the extension
is quite similar to the original method for hyperelliptic curves,
so we will omit the finer details \cite{picard curve}. We can say
that 
\[
f\left(f^{v_{j}}\right)'-v_{j}f'f^{v_{j}}=0.
\]
Comparing coefficients of $x^{k}$ on both sides, we find that 
\[
\left(k+1\right)m_{0}f_{k+1,v_{j}}+\left(k-v_{j}\right)m_{1}f_{k,v_{j}}+\ldots+\left(\left(k-c+1\right)-cv_{j}\right)f_{k-c+1,v_{j}}=0.
\]
Let 
\[
A(k)=\begin{pmatrix}0 & 1 & 0 & \ldots & 0\\
0 & 0 & 1 & \ldots & 0\\
\vdots & \vdots & \vdots & \ddots & \vdots\\
0 & 0 & 0 & \ddots & 1\\
\frac{m_{c}\left(cv_{j}-(k-c)\right)}{m_{0}k} & \frac{m_{c-1}\left((c-1)v_{j}-(k-c+1)\right)}{m_{0}k} & \ldots & \ldots & \frac{m_{1}\left(v_{j}-(k-1)\right)}{m_{0}k}
\end{pmatrix}.
\]

Now, if we define the vector $U_{k}=\left[f_{k-c+1,v_{j}},\ldots,f_{k-1,v_{j}},f_{k,v_{j}}\right]^{t}$
and initial vector $U_{0}=\left[0,\ldots,0,m_{0}^{v_{j}}\right]^{t},$
we have $U_{k}=A(k)U_{k-1}$ for $k\in\mathbb{N}$. Thus, $U_{k}=A(k)A(k-1)\ldots A(1)U_{0}$.
Thus, to compute $a_{\left(i,j\right),\left(i',j'\right)}=f_{pi'-i,v_{j}}$,
it suffices to find $U_{pi'-1}$ (note that $f_{pi'-i}$ is the $i$th
entry of $U_{pi'-1}$ from the right). Bostan et al.'s algorithm \cite{Boston}
cannot be applied directly because the entries of $A(k)$ are rational
functions, so we must modify the $U_{i}$ slightly: let $V_{k}=m_{0}^{k}k!U_{k}$
and $B(k)=m_{0}kA(k)$. Then, $V_{k}=B(k)B(k-1)\ldots B(1)V_{0}$
and the entries of the matrix $B(k)$ are polynomials in $\mathbb{F}_{p}[k].$ 

Using either of the algorithms from Theorems 14 and 15 from Bostan
et al.'s paper \cite{Boston}, we can compute $U_{pi'-1}$ for all
$i'$ and a particular $j$ in $O\left(\textrm{M}\left(\sqrt{p}\log p\right)\right)$
time. The desired runtime follows from applying this repeatedly to
all $j$ (there are at most $a$ possible values of $j$ because $1\le j\le a$).
\end{proof}
\begin{rem}
Given that $p>g$, the proof of Theorem \ref{thm:For-a-general} also
shows that we can simplify $A_{p}\left(C\right)$, the Hasse-Witt
matrix modulo $p$, in $O\left(\textrm{M}\left(\sqrt{p}\log p\right)\right)$
time.
\end{rem}
It should be noted that this algorithm can be extended to find $\#J_{C}\left(\mathbb{F}_{p}\right)$
for genus two and three curves of the same form. We can compute $\#J_{C}\left(\mathbb{F}_{p}\right)$
in $O\left(\textrm{M}\left(\sqrt{p}\log p\right)\right)$ time for
genus two curves and in $O\left(\textrm{M}\left(\sqrt{p}\log p\right)\right)$
time for genus three curves (refer to the discussion at the end of
Subsection \ref{subsec:specific trinomial}).

\section{Conclusion}

In this paper, we employed the Cartier operator to derive the entries
of the Hasse-Witt matrix of a superelliptic curve in terms of multinomial
coefficients, as shown in Theorem \ref{thm:The-Hasse-Witt-matrix}.
We then described and determined the time complexities of efficient
point-counting algorithms for both specific trinomial and general
curves. In particular, for specific trinomial superelliptic curves,
we reduced the problem of point-counting to a problem of solving Diophantine
equations of the form $x^{2}+dy^{2}=m$ and computed $\#C\left(\mathbb{F}_{p}\right)$
probabilistically in $O(\textrm{M}(\log p)\log p)$ time, deterministically
in $O\left(\textrm{M}(\log p)\log^{2}p\log\log p\right)$ (assuming
the generalized Riemann hypothesis), and deterministically in $O\left(\textrm{M}(\log^{3}p)\frac{\log^{2}p}{\log\log p}\right)$
time, assuming $p>16g^{2}$ and $\gcd(ac,p-1)=3,4,6\textrm{ or }8.$
We also described how to compute $\#J_{C}\left(\mathbb{F}_{p}\right)$
with the same running times for $g=2$ and with running time $O\left(\sqrt{p}\right)$
if $g=3$, assuming that $a$ also divides $p-1$. As shown in Example
\ref{Say--(99-bit)}, we implemented and demonstrated the efficiency
of the algorithm given in Theorem \ref{thm:number of points} to compute
$\#C\left(\mathbb{F}_{p}\right)$ for the superelliptic curve with
affine model $y^{4}=x^{11}+x^{8}$ over $\mathbb{F}_{564819669946735512444543556507}$.
For general superelliptic curves, we employed Bostan et al.'s algorithm
\cite{Boston} to compute $\#C\left(\mathbb{F}_{p}\right),$ $A_{p}\left(C\right),$
and $\#J_{C}\left(\mathbb{F}_{p}\right)$ in $O\left(\textrm{M}\left(\sqrt{p}\log p\right)\right)$
time, assuming $p>16g^{2}$ (and additionally $g=2\textrm{ or 3}$
for $\#J_{C}\left(\mathbb{F}_{p}\right)$). To the best of our knowledge,
we have written down the fastest point-counting algorithms for specific
trinomial superelliptic curves.

In the future, we hope to write down explicit algorithms for computing
$\#J_{C}\left(\mathbb{F}_{p}\right)$, which will plausibly involve
finding the coefficients of the characteristic polynomial of Frobenius.
We also plan on implementing the algorithm given in Theorem \ref{thm:For-a-general}
through a simple modification of the original algorithm described
by Bostan et al. for hyperelliptic curves \cite{Boston}. We also
hope to improve the efficiencies of our algorithms. Harvey et al.
discussed how the characteristic polynomials of Frobenius for genus
three hyperelliptic curves can be computed in $p^{1/4+o(1)}$ time
by ``lifting'' the characteristic polynomials modulo $p$ \cite{genus three fast arithemtic};
it may be possible to extend these methods to the curves of interest
in this paper. For studying Sato-Tate distributions, we will also
look into extending Harvey et al.'s approach for efficiently computing
$\#C\left(\mathbb{F}_{p}\right)$ and Hasse-Witt matrices for many
$p$ at once \cite{sutherland hasse-witt}. Finally, we hope to extend
our methods to non-superelliptic curves. Although not discussed in
this paper, we have also looked into interpreting the Hasse-Witt matrix
through the lens of Cech cohomology. We believe that such methods
are easier to generalize, especially for trinomial curves of the form
$y^{m_{1}}=c_{1}x^{n_{1}}+c_{2}x^{n_{2}}y^{m_{2}}.$

\section{Acknowledgments}

We would like to thank Professor Andrew Sutherland from MIT for suggesting
this project and for providing valuable advice. Moreover, none of
this research would have been possible without the help of Dr. Tanya
Khovanova, Professor Pavel Etingof, and Dr. Slava Gerovitch from MIT
for organizing PRIMES-USA, a math research program for high school
juniors.


\begin{thebibliography}{10}
\bibitem{picard curve}  M. Bauer, E. Teske, and A. Weng, Point counting
on Picard curves in large characteristic, \textit{Math. Comp.} \textbf{74}
(2005), 1983\textendash 2005.

\bibitem{Boston} A. Bostan, P. Gaudry, and E. Schost, Linear recurrences
with polynomial coefficients and application to integer factorization
and Cartier-Manin operator, \textit{SIAM J. Comput.} \textbf{36 }(2007),
1777\textendash 1806.

\bibitem{cartier} A. Couvreur, Codes and the Cartier operator, \textit{Proc.
Amer. Math. Soc.} \textbf{142} (2014), 1983\textendash 1996.

\bibitem{fast mult} A. De, P. P. Kurur, C. Saha, and R. Saptharishi,
Fast integer multiplication using modular arithmetic, \textit{SIAM
J. Comput.} \textbf{42} (2013), 685\textendash 699.

\bibitem{RS book} B. Deconinck and M. Patterson, Computing with plane
algebraic curves and Riemann surfaces: the algorithms of the Maple
package \textquotedblleft algcurves\textquotedblright , in A. I. Bobenko
and C. Klein, eds., \textit{Computational Approach to Riemann Surfaces},
Lect. Notes Math., Vol. 2013, Springer, 2011, pp. 67\textendash 123.

\bibitem{Sato-tate} F. Fité, K. S. Kedlaya, V. Rotger, and A. V.
Sutherland, Sato\textendash Tate distributions and Galois endomorphism
modules in genus 2, \textit{Compos. Math.} \textbf{148} (2012), 1390\textendash 1442.

\bibitem{sutherland} F. Fité and A. V. Sutherland, Sato-Tate groups
of $y^{2}=x^{8}+c$ and $y^{2}=x^{7}-cx,$ \textit{Contemp. Math.}
\textbf{663} (2016), 103\textendash 126.

\bibitem{japanese} E. Furukawa, M. Kawazoe, and T. Takahashi, Counting
points for hyperelliptic curves of type $y^{2}=x^{5}+ax$ over finite
prime fields, in \textit{SAC 2003}, Lect. Notes in Comp. Sci., Vol.
3006, Springer, 2003, pp. 26\textendash 41.

\bibitem{Cartier properties} A. Garcia and S. Tafazolian, Certain
maximal curves and Cartier operators, \textit{Acta Arith.} \textbf{135
}(2007), 199\textendash 218.

\bibitem{Harvey general curve} D. Harvey, Computing zeta functions
of arithmetic schemes, \textit{Proc. Lond. Math. Soc.} \textbf{111}
(2015), 1379\textendash 1401.

\bibitem{Harvey Kedlaya} D. Harvey, Kedlaya\textquoteright s algorithm
in larger characteristic, \textit{Int. Math. Res. Not.} (2007).

\bibitem{genus three fast arithemtic} D. Harvey, M. Massierer, and
A. V. Sutherland, Computing L-series of geometrically hyperelliptic
curves of genus three, preprint, https://arxiv.org/abs/1605.04708.

\bibitem{sutherland hasse-witt} D. Harvey and A. V. Sutherland, Computing
Hasse-Witt matrices of hyperelliptic curves in average polynomial
time II, \textit{Contemp. Math.} \textbf{663} (2016), 136\textendash 154.

\bibitem{hudson} R. H. Hudson and K. S. Williams, Binomial coefficients
and Jacobi sums, \textit{Trans. Amer. Math. Soc.} \textbf{281} (1984),
431\textendash 505. 

\bibitem{Koblitz} N. Koblitz, Why study equations over finite fields?,
\textit{Math. Mag.} \textbf{55} (1982), 144\textendash 149.

\bibitem{Manin} Y. I. Manin, The Hasse-Witt matrix of an algebraic
curve, \textit{Amer. Math. Soc. Transl. Ser. 2} \textbf{45} (1965),
245\textendash 264.

\bibitem{Pila} J. Pila, Frobenius maps of abelian varieties and finding
roots of unity in finite fields, \textit{Math. Comp. }\textbf{55}
(1990), 745\textendash 763.

\bibitem{poonen2} B. Poonen, Computational aspects of curves of genus
at least 2, in \textit{ANTS II}, Lect. Notes in Comp. Sci., Vol. 1122,
Springer, 1996, pp. 283\textendash 306.

\bibitem{picard genus 3} G. Sohn and and H. Kim, Explicit bounds
of polynomial coefficients and counting points on Picard curves over
finite fields, \textit{Math. Comput. Model.} \textbf{49} (2009), 80\textendash 87. 
\end{thebibliography}
\end{document}